\numberwithin{equation}{section}
\newtheorem{theorem}[equation]{Theorem} 
\newtheorem{proposition}[equation]{Proposition}
\newtheorem{lemma}[equation]{Lemma} 
\newtheorem{corollary}[equation]{Corollary} 
\newtheorem{remark}[equation]{Remark}
\newtheorem{definition}[equation]{Definition}
\theoremstyle{definition}
\newtheorem{notation}{Notation}
\theoremstyle{remark}
\newtheorem{example}[equation]{Example}
\newtheorem{question}{Question}
\DeclareMathOperator {\length} {length}
\DeclareMathOperator{\image}{\mathrm{image}}
\DeclareMathOperator{\domain}{\mathrm{domain}}
\begin{document}
\title {Incompleteness theorems via Turing category} 
% less provocative
% Use letters for affiliations, numbers to show equal authorship (if applicable) and to indicate the corresponding author
\author{Yasha Savelyev} 
\href{http://yashamon.github.io/web2/papers/turingcategory.pdf} {Direct link to author's version}
\address{University of Colima, Department of Sciences, CUICBAS}
\email {yasha.savelyev@gmail.com}
\begin{abstract}
% We significantly extend the original disjunction of G\"odel, stating that either the idealized human mind is not computable or there exists  true Diophantine statements that a human mind will never decide to be true.
% The main issue is what means ``idealized'' 
We give a reframing of
G\"odel's first and second incompleteness theorems that
applies even to some undefinable theories of arithmetic.  The usual Hilbert–Bernays provability
conditions and the ``diagonal
lemma'' are absent, replaced by a more
direct diagonalization argument, from first principles, based
in category theory and in a sense analogous to Cantor's original
argument.  To this end, we categorify the theory G\"odel encodings, which might be of
independent interest. In our setup, the G\"odel sentence is 
computable explicitly by construction even for $\Sigma ^{0} _{2}$
theories (likely extending to $\Sigma ^{0} _{n}$).  In an
appendix, we study the relationship of our reframed second
incompleteness theorem with arguments of Penrose.
\end{abstract}
\maketitle
\section {Introduction} \label{section:intro}
Ordinarily, G\"odel incompleteness theorems are formulated
for definable arithmetic theories. In fact, the original
incompleteness theorems applied to computably enumerable
theories of arithmetic, but the latter can be generalized in
some ways to $\Sigma _{n}$ theories, see for instance
~\cite{cite_KikuchiGeneralizationsGodel}. One of the goals
here is to extend first and second incompleteness theorems
to certain undefinable theories of arithmetic. Another goal
is to rework the incompleteness theorems from a more set
theoretic/categorical point of view, utilizing a certain category of
G\"odel encodings. The extension to some undefinable
theories is then one proof of concept, for this
recasting of the incompleteness theorems.

We will work from first principles, and within the
meta-theory $ZFC$, that is all theorems are theorems of
$ZFC $ (and mostly just $ZF$). Our setup will be partly language agnostic, we
work with a general $\mathcal{L} $-theory, interpreting
arithmetic but our notion of interpretation is very weak. 
Partly for this reason various standard tools like
Hilbert-Bernays provability conditions and the ``diagonal
lemma''  will not appear, as translating
them to our context is difficult.  
Instead, the
diagonalization argument that we use is more direct,
in a sense elementary, and more analogous to the
set theoretic diagonalization argument of Cantor, as well as to Turing's original work on the halting
problem. To this end we also categorify the
theory of G\"odel encodings, obtaining what we call the
Turing category. This might be of independent interest.

If we simplify some assumptions \footnote{For example
restrict to c.e. theories of arithmetic as in the original
incompleteness theorems.},  our
argument drastically simplifies, and may be of interest purely as
another, from first principles of Turing machines approach to the classical incompleteness theorems. 

The primary motivation and many of the choices we make, most notably
the choice to work with stably c.e. theories (see following
section), are
based around the problem of formalizing a certain version of
an argument of Roger Penrose on potential non-computability in
physics. The reader may see Appendix \ref{section:Penrose},
however many terms there are defined throughout the paper.
% TODO: {add more stuff here}

\subsection {Definition of stably c.e. theories and
statements of results}
Let $\mathcal{L} $
be a first order language, in particular a formal language
in a countable alphabet including the
symbols of first order logic, with syntax of first
order logic, such that elements of $\mathcal{L} $ are
sentences (in the logical sense). 

We now quickly introduce stable c.e. theories. In what
follows, a map is always a partial map, unless it is specified to be total.
Suppose we are given a map $$M: \mathbb{N} \to \mathcal{L}  \times \{\pm\},
$$ for $\{\pm\}$ denoting a set with two elements $+,-$. 
\begin{definition} \label{def:main} 
We say that  \textbf{\emph{$\alpha \in \mathcal{L}$ is $M$-stable}} if there is an $m$ with $M (m) = (\alpha,+)$ s.t. there is no $n>m $ with $M (n) =  (\alpha,-)$. Let $M ^{s} \subset 
\mathcal{L}$ denote the set of $M$-stable $\alpha$,  
called \textbf{\emph{the stabilization of $M$}}. 
\end{definition}
\begin{remark} \label{remark_intro}
For an informal motivation of how such an $M$ may appear in
practice consider the following. With $\mathbb{N} $ playing
the role of time, $M$ might be a mathematician producing
sentences of arithmetic that it believes to be true, at each
moment $n \in \mathbb{N}$. But $M$ is also allowed to correct
itself in the following sense. 
\begin{itemize}
	\item $M  (n) = 
(\alpha,+),$ only if at the moment $n$ $M$ decides 
that $\alpha$ is true.
\item $M (m) = 
(\alpha,-),$ only if at the moment $m$, $M$ no 
longer asserts that $\alpha$ is true, either 
because at this moment $M$ is no longer able to decide 
$\alpha$, or because it has decided it to be 
false. 
\end{itemize}
\end{remark}
\begin{definition} \label{def:stablycomputableintro} 
A subset $S \subset \mathcal{L} $, is called 
\textbf{\emph{stably computably enumerable}} or 
stably c.e., if there is a computable map
(see Definition \ref{def:computable}) $T: \mathbb{N} \to 
\mathcal{L} \times \{\pm \} $ so that $S = T ^{s} 
$. In this case we also say that $T$ \textbf{\emph{stably
enumerates}} $S$.  We say that $T \in \mathcal{T} $ \textbf{\emph{stably
computes}} $ M: \mathbb{N} \to \mathcal{L} \times \{\pm\},$
if it computes some $N: \mathbb{N} \to  \mathcal{L}  \times \{\pm \}$, s.t. $M ^{s} = N ^{s}$.
% TODO: {stably enumerates}
\end{definition} 
It is fairly immediate that a stably c.e. $S$ is $\Sigma
_{2} = \Sigma ^{0} _{2}$  definable. The converse is also true, every $\Sigma
_{2}$ definable set $S \subset \mathcal{L} $ is stably c.e.. To prove this 
we may build on Example \ref{remark:dio}, to construct an
oracle and then use the theorems of Post, (see
\cite{cite_Soare}), relating the arithmetic hierarchy with
the theory of Turing degrees. We omit the details as this
will not be essentially used, and is a well understood idea.

% not notationally distinguish between naturals $n \in
% \mathbb{N} $ and corresponding numerals in the language of
% arithmetic, as in the context of this paper it is usually
% clear which is meant, for we will not do any numeral
% manipulation.

Let $\mathcal{A} $ denote the first order language of arithmetic with non-logical symbols $\{0, +, \times, s, < \})$. 
In the following we need a notion of a $n$-translation from
the language of arithmetic to another abstract language,  as well as the notion of
$n$-consistency and strong consistency. The preliminaries
for this are given in Section \ref{sec_firstorder}.

% The first theorem below is a direct generalization of
% the modern form of G\"odel's first incompleteness 
% theorem, as we mainly just weaken the assumption of $F$ being 
% c.e. to being stably c.e..

Note that because our notion of a translation is weak, we
cannot reduce the following to the second incompleteness
theorems for definable arithmetic theories. This is because the
corresponding arithmetic theory, that is the theory $F _{i,
\mathcal{A}}$ in the notation of Section
\ref{sec_firstorder}, will generally not be arithmetically definable. 

\begin{theorem} \label{thm:corollarySecondIncompleteness}
Let $F$ be a theory in any language $\mathcal{L}$, such that $F
\vdash ^{i} ZFC$ for a  2-translation $i$. Then if $F$ is strongly consistent:
\begin{equation}
    \label{eq:F1cons}
(F \nvdash ^{i} \text{$F$ is 1-consistent}) \lor
(F \nvdash ^{i} F \text{ is stably c.e., i.e. is $\Sigma _{2}$}). 
\end{equation}
\end{theorem}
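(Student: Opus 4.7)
The plan is to reduce this to the paper's main second incompleteness theorem, which I expect reads: any strongly consistent stably c.e. theory $F$ with $F \vdash^{i} ZFC$ via a 2-translation $i$ satisfies $F \nvdash^{i} \text{``$F$ is 1-consistent''}$. Suppose for contradiction that both disjuncts in \eqref{eq:F1cons} fail, so simultaneously $F \vdash^{i} \text{``$F$ is 1-consistent''}$ and $F \vdash^{i} \text{``$F$ is stably c.e.''}$. It will suffice to derive a contradiction from these two assumptions together with strong consistency and the 2-translatability of $ZFC$.

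The first step is to externalize the internal claim that $F$ is stably c.e. Since ``$F$ is stably c.e.''\ is a $\Sigma_{2}$ assertion in the language of arithmetic, and $i$ is a 2-translation, the existential witness (a computable $T \colon \mathbb{N} \to \mathcal{L} \times \{\pm\}$ with $T^{s}=F$) should be extractable from an $F$-proof. I expect strong consistency to be defined precisely so that $F$-provable $\Sigma_{2}$ sentences under $i$ hold in the standard model of arithmetic; this yields a genuine such $T$, so that $F$ is, as an external fact, stably c.e.

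Once $F$ is known to be honestly stably c.e., I apply the earlier second incompleteness theorem. By that theorem, the strongly consistent, stably c.e. theory $F$ cannot satisfy $F \vdash^{i} \text{``$F$ is 1-consistent''}$, contradicting the assumption on 1-consistency. Hence at least one of the two disjuncts must hold, proving the theorem.

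The main obstacle is the first step: converting an internal $\Sigma_{2}$ provability statement into an external computable witness. This is exactly where the hypothesis of strong consistency, as opposed to mere consistency or $\omega$-consistency, is essential, since it is what permits reading off a genuine computable $T$ from the existence of an $F$-proof of its existence. If the definition of strong consistency in Section \ref{sec_firstorder} already packages in this $\Sigma_{2}$-soundness property, the extraction is essentially by definition; otherwise a small additional argument is needed, likely exploiting $F \vdash^{i} ZFC$ and the 2-translation hypothesis to realize the Skolem witness in the metatheory. Everything else is a formal reduction to the previously established second incompleteness theorem for stably c.e. theories.
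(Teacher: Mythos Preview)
Your reduction to the earlier second incompleteness result is the right instinct, but the form you guess for that result does not match what the paper actually proves. Corollary~\ref{cor_secondinc} is \emph{not} ``every strongly consistent stably c.e.\ $F$ with $F \vdash^{i} ZFC$ fails to prove its own 1-consistency.'' Its hypothesis is that $F \vdash^{i} \text{``$T$ stably enumerates $F$''}$ for a \emph{specific} $T \in \mathcal{T}$, together with external 1-consistency of $F$. Your plan---extract a genuine $T$ with $T^{s}=F$ from strong consistency, then apply the external theorem---leaves a gap: from $F \vdash^{i} \exists T'\,(T' \text{ stably enumerates } F)$ you do not get $F \vdash^{i} \text{``$T$ stably enumerates $F$''}$ for the extracted $T$, and without that $F$ cannot internally derive $\mathcal{G}(T)$ from ``$F$ is 1-consistent.'' (Also, ``$F$ is stably c.e.'' is not a $\Sigma_{2}$ arithmetic sentence in general; its complexity depends on how $F$ itself is described, so the $\Sigma_{2}$-soundness story for extraction is not quite the right picture.)

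The paper sidesteps exactly this difficulty by arguing \emph{inside} the standard model $M$ guaranteed by strong consistency, rather than pulling a witness out. Both assumed provabilities transfer to $M$ (so $M \models (F \vdash^{i} \text{$F$ is 1-consistent})$ and $M \models \exists T\,(T \text{ stably enumerates } F)$), and since Corollary~\ref{cor_secondinc}, rephrased over all $T \in \mathcal{T}_{\mathcal{L}}$ as a statement about $(T')^{s}$, is itself a theorem of $ZFC$, it also holds in $M$. The existential witness $T$ lives in $M$, and instantiating the $ZFC$-theorem at that $T$ gives a contradiction \emph{in $M$}. Working internally is what lets the witness and the self-referential sentence ``$F$ is 1-consistent'' line up, which is precisely the step your external argument cannot supply.
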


The above is based on the following results.
% \begin{theorem} \label{thm_mainintro0}
% Let $F$ be an $\mathcal{L} $-theory having
% the properties:
% \begin{enumerate}
% 	\item There exists a translation $i: \mathcal{A} \to
% 	\mathcal{L} $.
%   \item $F$ is 2-consistent.
% 	\item $F$ is stably c.e..
% \end{enumerate}
% Then there exists an $F$-independent sentence $\mathcal{G}
% (F)$ in the image of $i$. 
% Moreover $ZFC \vdash  i ^{-1}(\mathcal{G} (F))$, with $ZFC$
% interpreting arithmetic naturally.
% % the translation $i: \mathcal{A} \to \mathcal{Z}$ the standard
% % translation, see Section \ref{sec_firstorder}.
% \end{theorem}
% % TODO: {add proof}
% Again we could, restate this as a theorem about theories of
% arithmetic which are not arithmetically definable, but have
% the property that they admit interpretations of appropriate
% sort.
%
% Note because our notion of translation is very weak, the
% $\mathcal{A}$-theory induced by $i$ (the theory $F
% _{i,\mathcal{A}}$ in the notation \ref{not_induced}) is not
% arithmetically definable.  Hence, the
% standard G\"odel incompleteness theorems (even for $\Sigma
% _{n}$ theories) do not apply to such an $\mathcal{A} $.
The next theorem in particular tells us that the G\"odel
sentence is ``computable'', even in the setup of abstract
$\mathcal{L} $-theories above.
\begin{theorem} \label{thm_mainintro}  
Let $\mathcal{L} $ be any fixed first order language, and
let $i: \mathcal{A} \to \mathcal{L} $ be some 2-translation. Then there
is a total computable map $$\mathcal{G}: \mathcal{T} \to
\mathcal{A},$$ (depending only on $i$) where the domain $\mathcal{T}$ is the set of Turing machines $\mathbb{N} \to \mathbb{N} $, and where $\mathcal{G} $ satisfies the following.  Suppose that $T$ \emph{stably computes}  $M:
\mathbb{N} \to \mathcal{L} \times \{\pm \}$. Let $F= M
^{s}$, then we have:
 \begin{enumerate}
	\item \text{$F \nvdash ^{i} \mathcal{G}
	(T)$}  if $F$ is 1-consistent. \label{part_1}
	\item $F \nvdash ^{i} \neg \mathcal{G}
	(T)$ if $F$ is 2-consistent.
	\item 
\begin{equation} \label{eq:secondincompletness}
(\text{$F$ is $1$-consistent)} \implies \mathcal{G} (T).
\end{equation}
% TODO: {add footnote \footnote {When $F
% _{\mathcal{A}}$ is computable (that is recursive) }}

% More formally stated, the sentence 
% \eqref{eq:secondincompletness} is equivalent in $ZFC$ to an
% arithmetic sentence provable by $PA$. In particular, $emb
% _{\mathcal{A} }(\mathcal{G} (T))$ is 
% true in the standard model of arithmetic whenever 
% $F$ is 1-consistent. 
\label{part_3}
\end{enumerate}
Furthermore, the Turing machine computing
$\mathcal{G} $, can itself be given constructively.
\end{theorem}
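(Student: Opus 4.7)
The plan is to construct $\mathcal{G}$ by a direct diagonal argument, in the style of Turing's halting-problem proof rather than via Hilbert--Bernays derivability conditions and the diagonal lemma. The enabling observation, coming from the Turing-category machinery developed earlier, is that when $T$ stably computes $M$ with $F=M^{s}$, the provability relation ``$F\vdash^{i}\phi$'' is $\Sigma_{2}^{0}$ uniformly in $(T,\phi)$: a proof from $F$ is a finite sequence of sentences each produced by a logical inference or by membership in $M^{s}$, and the finitely many stability assertions collapse into a single $\Sigma_{2}$ statement. Hence there is a $\Sigma_{2}$ formula $\mathrm{Prov}_{T}(x)$ of $\mathcal{A}$ whose $\mathbb{N}$-truth captures internal provability in $F$ via $i$.

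I would then perform the diagonalization externally and explicitly. Let $U$ be a fixed universal Turing machine and let $D\colon e\mapsto\neg\,\mathrm{Prov}_{T}(\ulcorner U(e,e)\urcorner)$ be the total computable map whose output is a sentence of $\mathcal{A}$. Picking an index $e^{*}$ for $D$, set
\[
\mathcal{G}(T):=D(e^{*})=\neg\,\mathrm{Prov}_{T}(\ulcorner\mathcal{G}(T)\urcorner).
\]
This yields the required total computable $\mathcal{G}\colon\mathcal{T}\to\mathcal{A}$; it is the externally computed Turing--Kleene analogue of G\"odel's self-referential equation, and in particular invokes no internal diagonal lemma for $F$, which is important because the 2-translation $i$ is far too weak to support one.

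For part (1), suppose toward contradiction that $F\vdash^{i}\mathcal{G}(T)$. Then $\mathrm{Prov}_{T}(\ulcorner\mathcal{G}(T)\urcorner)$ is true, witnessed externally by the actual proof together with the relevant stability times of $T$. The Turing-category framework lets us repackage these witnesses as true $\Sigma_{1}$ statements about $T$, and $1$-consistency of $F$ (through $i$) then yields $F\vdash^{i}\mathrm{Prov}_{T}(\ulcorner\mathcal{G}(T)\urcorner)$, i.e.\ $F\vdash^{i}\neg\mathcal{G}(T)$, contradicting consistency. Part (2) is symmetric: a derivation $F\vdash^{i}\neg\mathcal{G}(T)$ is a proof of a $\Sigma_{2}$ sentence which by part (1) is false, contradicting $\Sigma_{2}$-soundness (that is, $2$-consistency). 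Part (3) is the meta-theoretic reading of part (1): under $1$-consistency we have $F\nvdash^{i}\mathcal{G}(T)$, but unfolding the construction this is literally the assertion of $\mathcal{G}(T)$. The explicit Turing machine computing $\mathcal{G}$ is obtained by concatenating the uniform constructions above.

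The principal obstacle is the step in part (1) passing from the external truth of $\mathrm{Prov}_{T}(\ulcorner\mathcal{G}(T)\urcorner)$ to its internal provability in $F$ via $i$, using only $1$-consistency rather than a global reflection or derivability principle. Classically this would be $\Sigma_{1}$-completeness of $F$, but here $F$ is only $\Sigma_{2}$ and $i$ only a 2-translation, so the required closure must be extracted from the morphism-level structure of the Turing category: one must show that each externally observed stabilization event of $T$ maps, under $i$, to a true $\Sigma_{1}$ arithmetic sentence that $F$ in fact proves, so that the external proof witness transports into an internal proof of $\mathrm{Prov}_{T}(\ulcorner\mathcal{G}(T)\urcorner)$ in $F$. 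Everything else, including the second-incompleteness consequence in Theorem~\ref{thm:corollarySecondIncompleteness}, should then follow by formal combinatorics over parts (1)--(3).
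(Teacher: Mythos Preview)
Your overall architecture---external diagonalization in the Turing/Kleene style, no internal diagonal lemma---is exactly the spirit of the paper. But the step you yourself flag as the ``principal obstacle'' in part~(1) is a genuine gap, and your proposed resolution does not work.

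You want to pass from the external truth of $\mathrm{Prov}_T(\ulcorner\mathcal{G}(T)\urcorner)$ to $F\vdash^{i}\mathrm{Prov}_T(\ulcorner\mathcal{G}(T)\urcorner)$. That would require $\Sigma_2$-completeness of $F$, and $1$-consistency provides nothing of the kind: it is a \emph{soundness} condition (if $F\vdash^{i}\exists m\,\phi(m)$ then $F$ cannot refute every instance), not a completeness condition, so it can never be used to conclude that $F$ proves something. Your fallback, that ``each externally observed stabilization event of $T$ maps to a true $\Sigma_1$ sentence that $F$ proves'', fails for the same reason: a stabilization event has the form $(\exists m)\bigl[T(m)=(b,+)\wedge(\forall n>m)\,T(n)\neq(b,-)\bigr]$, and the second conjunct is genuinely $\Pi_1$. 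There is no mechanism by which a merely $1$-consistent $F$ proves arbitrary true $\Pi_1$ facts about $T$, so the internal contradiction you want inside $F$ never materializes. Since parts~(2) and~(3) of your argument rest on part~(1), the whole proof is blocked here.

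The paper's resolution is to give up on obtaining the contradiction inside $F$. It introduces the \emph{speculative closure} $Spec_i$ (Theorem~\ref{lemma:speculative}), which adjoins to $F$ every sentence $\forall m\,\phi(m)$ with $\phi\in\Sigma^0_0$ and $Q\vdash\phi(m)$ for all $m$; the resulting theory $N^{s}=(C\circ Spec_i(T))^{s}$ is $\Sigma_2$-complete by construction. The G\"odel sentence is then built relative to $N$, not $F$: one sets $\mathcal{G}(T)=s(Tur(T))$ where $Tur$ is obtained from $Dec_{\mathcal{L}}\circ C\circ Spec_i$. Now if $F\vdash^{i}\mathfrak{s}$ then $N\vdash^{i}\mathfrak{s}$ (since $N^{s}\supset F$), and unwinding gives that $\neg\mathfrak{s}$ is true; speculativeness of $N$ then yields $N\vdash^{i}\neg\mathfrak{s}$, so $N$ is inconsistent. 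The role of $1$-consistency is solely Theorem~\ref{lemma:speculative}\,(\ref{prop:3}): $1$-consistency of $F$ forces consistency of $N$, closing the argument. In short, the missing idea in your proposal is $Spec_i$; without baking an analogue of it into the diagonal object, part~(1) cannot be completed.
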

% Without the computability property of $\mathcal{G}$, analogues of the above
% theorem are known, see for instance:
% \cite{cite_GeneralizationGodelSahid}, ~\cite[Proposition 5.3, Theorem 
% 5.6]{cite_KikuchiGeneralizationsGodel}), 
As a corollary we get a more basic form of Theorem \ref{thm:corollarySecondIncompleteness}:  
% % In what follows by a \emph{definable}  theory 
% % $F \subset \mathcal{Z} $ we mean a set $F$ which is definable in
% % set theory. 
% $F$ is some $\mathcal{L} $-theory such that $F
% \vdash ^{i} ZFC$ and $F _{\mathcal{Z} }$ is as in Definition \ref{not_induced}.
\begin{corollary} \label{cor_secondinc} 
For $F$ a theory in any language $\mathcal{L}$ such that $F
\vdash ^{i} ZFC$, where $i$ is a 2-translation: 
\begin{equation*}
\forall T \in \mathcal{T}  \; ((\text{$F$ is 1-consistent})
\land (F  \vdash ^{i} \text{$T$ stably enumerates
$F$}) \implies (F \nvdash ^{i}
\text{$F$ is 1-consistent})).   
\end{equation*}
\end{corollary}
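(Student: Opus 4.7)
The plan is to derive Corollary \ref{cor_secondinc} from Theorem \ref{thm_mainintro} by an argument by contradiction that internalizes part \eqref{part_3} inside $F$. Suppose, toward a contradiction, that all three conjuncts fail to combine as claimed: $F$ is 1-consistent, $F \vdash^{i} \text{``$T$ stably enumerates $F$''}$, and also $F \vdash^{i} \text{``$F$ is 1-consistent''}$. The goal is to produce both $F \vdash^{i} \mathcal{G}(T)$ and $F \nvdash^{i} \mathcal{G}(T)$.

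First I would read part \eqref{part_3} of Theorem \ref{thm_mainintro} as a theorem of the meta-theory $ZFC$: for every Turing machine $T' \in \mathcal{T}$ and every subset $G \subset \mathcal{L}$, if $T'$ stably enumerates $G$ and $G$ is 1-consistent, then $\mathcal{G}(T')$ holds. This is legitimate because the construction $T' \mapsto \mathcal{G}(T')$ is a total computable map whose underlying Turing machine is given constructively, hence representable. Since the hypothesis of the corollary is $F \vdash^{i} ZFC$, $F$ proves via $i$ the formal $\mathcal{A}$-sentence corresponding to this universal implication. Instantiating at our given $T$ and using the two standing hypotheses together with modus ponens inside $F$, one obtains $F \vdash^{i} \mathcal{G}(T)$.

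On the other hand, since $T$ stably computes some $M$ with $M^{s} = F$, part \eqref{part_1} of Theorem \ref{thm_mainintro} applied to $T$ and $F$, together with the assumption that $F$ is 1-consistent, yields $F \nvdash^{i} \mathcal{G}(T)$. This contradicts the previous paragraph, so the corollary is established.

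The main obstacle is the faithful internalization step. What must be verified is that part \eqref{part_3}, which is proved as a meta-theorem about external objects, really does translate into a sentence $F$ proves via $i$ about their internal descriptions. Three alignments are needed: (i) the internal representation of the Turing machine $T$ and of the arithmetic sentence $\mathcal{G}(T)$ matches the external objects up to $i$; (ii) the formal notion of ``$F$ is 1-consistent'' used in the second hypothesis is the one witnessed by $T$'s stable enumeration of $F$, so that $F$ can instantiate the universal statement at $G = F$; and (iii) the $ZFC$-proof of part \eqref{part_3} survives passage through $i$. The assumption that $i$ is a 2-translation is what makes (iii) go through, as it transfers precisely the $\Sigma_{2}$ content carried by the predicate ``stably c.e.''.
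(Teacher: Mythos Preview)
Your argument is essentially identical to the paper's: assume all three hypotheses, use $F \vdash^{i} ZFC$ to internalize part \eqref{part_3} of Theorem \ref{thm_mainintro} and deduce $F \vdash^{i} \mathcal{G}(T)$, then contradict part \eqref{part_1}. Your closing paragraph on the internalization obstacles is additional care that the paper leaves implicit, but the strategy and the individual steps match the paper's proof exactly.
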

% \begin{remark} \label{rem_}
% We can replace $ZFC$ by $PA$ but then the statement is much more
% complicated.
% \end{remark}

% What follows is a stronger variant of the previous
% result, as we replace stable enumerability by a given Turing
% machine, by just the stable c.e. condition. However, we need
% to suppose that $F$ is strongly consistent, (see Definition
% \ref{def_stronglyconsistent}).
% \begin{theorem} \label{thm:corollarySecondIncompleteness}
% For $F$ a theory in any language $\mathcal{L}$ such that $F
% \vdash ^{i} ZFC$ for a computable $i$: if $F$ is strongly consistent, (see Definition \ref{def_stronglyconsistent}):
% \begin{equation}
%     \label{eq:F1cons}
% (F \nvdash ^{i} \text{$F$ is 1-consistent}) \lor
% (F \nvdash ^{i} F \text{ is stably c.e., i.e. is $\Sigma _{2}$}). 
% \end{equation}
% \end{theorem}
% The following reframes the original second incompleteness
% theorem of G\"odel:
% \begin{theorem} \label{thm:corollarySecondIncompleteness2}
% Let $F$ be a definable theory in the language of set 
% theory s.t. $F \vdash ZFC$. Then if $F$ is strongly consistent (see
% Definition \ref{def_1-consistentset}): 
% \begin{equation}
%     \label{eq:F1cons}
% (F \nvdash \text{$F _{\mathcal{A}}$ is 
% consistent}) \lor (F \nvdash 
% \text{ $F _{\mathcal{A}}$ is computably enumerable}). 
% \end{equation}
% \end{theorem}
% This is proved by the same argument. 
\subsection{Generalizations to $\Sigma _{n}$} 
There are natural candidates for how to generalize the above. We may replace $M: \mathbb{N} \to \mathcal{L} \times \{\pm \} $ by $M: \mathbb{N} ^{n} \to \mathcal{L} 
\times \{\pm \} $, using this we can define a notion of 
$n$-stable computability, specializing to stable 
computability for $n=1$. The above theorems should 
generalize to this setting of $n$-stable computability. In
terms of arithmetic complexity this should be exactly the
class $\Sigma ^{0} _{n+1}$. We leave this for future developments.  
\section {Some preliminaries} \label{sec:prelims}
\subsection{Abstractly encoded sets and the Turing category} \label{sec:abstractencodedsets}
The material of this section will be used in the 
main argument.  
The approach here is in essence the \emph{standard} approach,
but usually encoding systems are implicit and fixed, as
ordinarily one is working with some concrete sets and
concrete encodings. We need to work with abstract sets (in
particular all abstract first order languages), and we need
to formalize the properties of encodings in such a way that
showing maps are computable can be done axiomatically, as
the maps we construct are fairly complex.  For this, it is very natural to use the language of category theory.

\begin{definition}\label{def_}
	An \textbf{\emph{encoding map}} of a set $A$ is an injective
total set map $e: A \to \mathbb{N} $, such that the set $A _{e}
= e (A)$ is computable (recursive).
\end{definition}
 Here, as is standard, a 
set $S \subset \mathbb{N} $ is called 
\emph{computable} if both $S$ and its complement 
are computably enumerable, with $S$ called 
\emph{computably enumerable} if there is a 
computable partial function $\mathbb{N} \to \mathbb{N}$ with
range $S$.   

We extend the collection of encodings to a structure of a category. 
\begin{definition} We denote by $\mathcal{T}$ the 
set of all Turing machines $T: \mathbb{N} \to \mathbb{N} $.
We write $*T (n)$ for the computation sequence of the
Turing machine $T$ with input $n$. 
As usual, for $T \in \mathcal{T} $, $T$ also denotes the
underlying partial function with $T (n) = m$ if $*T (n) $ halts with output $m$, and undefined otherwise. 
\end{definition}
In what follows, a map is a partial map, unless we specify
that it is total. Define a large (the set of objects
is a proper class) arrow type category
${\mathcal{S}}$ whose objects are encoding maps $e
_{A}: A \to \mathbb{N}$.  
Explicitly, objects $\operatorname {obj} \mathcal{S}  $ of
$ \mathcal{S}$ consists of pairs $(A, e _{A}) $ where $A$ is
a set, and $e _{A}: A \to \mathbb{N}$ an encoding map.

We now describe the morphisms of $\mathcal{S} $.
% First we say that a diagram of partial maps:
% \begin{equation*}
% \begin{tikzcd}
% A \ar[r, ""] \ar [d, ""] &  B \ar [d,"e _{M}"] \\
% \mathbb{N}  \ar [r, "T"]   & \mathbb{N}.
% \end{tikzcd}
% \end{equation*}
\begin{definition} \label{def:computable} For $(N, e _{N})$,
$(M, e _{M})$ in $\operatorname {obj}  \mathcal{S} $,
a morphism is a map $f$ s.t. there is a commutative diagram:
\begin{equation*}
\begin{tikzcd}
N \ar[r, "f"] \ar [d, "e _{N}"] &  M \ar [d,"e _{M}"] \\
\mathbb{N}  \ar [r, "T"]   & \mathbb{N},
\end{tikzcd}
\end{equation*}
for some $T \in \mathcal{T}$. 
\end{definition}
To simplify notation, we may omit specifying the encoding map
for a given object of $\mathcal{S}$, keeping track of it
implicitly. With this in mind, for $N,M \in \mathcal{S} $ we
say that $T$ \textbf{\emph{computes}} a map $f: N \to M$ if $T$ fits into
a commutative diagram as above.
We say that $f: N \to M$ is \textbf{\emph{computable}}  if there exists a $T \in \mathcal{T} $ which computes $f$.

So in the terms above, the set $hom _{\mathcal{S} } ((N, e _{N}), (M, e _{M}))$ is the set of computable maps $f: N \to M$.
In what follows, if not specified, the encoding of
$\mathbb{N} $ is the $id: \mathbb{N} \to \mathbb{N} $. \begin{proposition} \label{prp_propertiesOfS}
The category $\mathcal{S} $, called the \textbf{\emph{Turing category}}, satisfies the following:
\begin{enumerate}
\item \label{axiom:product} The category $\mathcal{S} $ has
finite products. (In fact it has all finite limits and
colimits, but we will not need this.) This entails the following. If $(A, e _{A}), (B, e _{B}) \in 
\mathcal{S}$ then there is a distinguished encoding map $e
_{A \times B}: A \times B \to \mathbb{N} $, called the
\textbf{\emph{product encoding map}}  s.t.:
\begin{enumerate}
\item The projection maps ${pr} ^{A} : A \times B \to A$, ${pr}^B: A \times B \to B$ are computable. Similarly for $pr ^{B} $.   
\item \label{axiom:slice} If $f: A \to B$ 
is computable, and 
$g:A \to C$ is computable then  $A \to B \times C, $ $a 
\mapsto (f (a), g (a))$ is computable.  
\item If $f: A \to B$, 
$g: C \to D$ are computable 
then the map $A \times B \to C \times D $, $(a,b) 
\mapsto (f (a), g (b) ) $ is computable.	
\end{enumerate}
From now on, the encoding map of $A \times B$ is taken to be
the product encoding map (with respect to some
possibly implicit encoding maps of $A, B$).
% \item \label{axiom_restriction} If $(A, e _{A}) \in \mathcal{S} $ and $B \subset
% A $ s.t. $e _{A} (B)$ is computable, then $B$ is encoded.
% We call this the \textbf{\emph{induced encoding}}. 
\item \label{axiom:universal} The set  
$\mathcal{T}$ has a distinguished encoding $e
_{\mathcal{T}}$ s.t. the following holds.  
Define $$U: 
\mathcal{T} \times \mathbb{N}  \to \mathbb{N},$$ $$U 
(T, \Sigma ):= \begin{cases}
	T (\Sigma), &\text{ if $*T(\Sigma)$ halts} \\
\text{undefined}, &\text{otherwise}.
\end{cases}
$$ 
Then $U$ is computable.
(The Turing machine computing $U$ may be called the ``universal Turing machine''.)  From now $\mathcal{T} $ as an object $\mathcal{S} $ is taken with respect to this distinguished encoding.
\item \label{axiom:split}  
Let $A,B,C \in \mathcal{S} $, and suppose that $f: A \times B \to C$ is computable. Let $f ^{a}: B \to C $ be the map $f ^{a}
(b) = f (a,b) $. Then there is a computable map $$s: A \to 
\mathcal{T}$$ so that for each $a$ $s 
(a)$ computes $f ^{a}$.
\item \label{axiom:utility}  For a set $A$ let $$L ({A} ) := \bigcup _{n 
\in \mathbb{N}} Maps (\{0, \ldots, n\}, {A}), $$ where $Maps (\{0, \ldots, n\}, {A})$ denotes the set of 
total maps. If $(A, e _{A}) \in \mathcal{S} $ then
there is a distinguished encoding $e _{L (A)}$ of $L ({A} )$ s.t.:
\begin{enumerate}
\item The length function $$\length: L ({A} ) \to \mathbb{N},$$ 
is computable, where for $l \in L 
(\mathcal{A} ) $, $l: \{0, \ldots n \} \to A$,  $\length (l)=n$.  
\item Define $$P: L ({A}) \times \mathbb{N}  \to 
{A},$$ 
$$P (l,i): = \begin{cases}
	l (i), & \text{ if $0 \leq i \leq \length (l)$ }\\
 \text{undefined}, & \text{for $i> 
\length (l) $}.  	
\end{cases} $$
Then $P$ is computable.
\item \label{axiom:lists} For $A,B \in \mathcal{S} $ and $f: A \to L 
(B) $ a partial map, suppose that:
\begin{itemize}
\item The partial map $ A \times \mathbb{N}  \to B$, $(a,n)
\mapsto P (f (a),n) $  is computable.
\item The partial map $A \to \mathbb{N} $, $a 
\mapsto \length (f (a))  $ is computable.
\end{itemize}
Then $f$ is computable. From now, given $A \in \mathcal{S} $ the encoding of $L (A)$ is
assumed to be such a distinguished encoding, called the
\textbf{\emph{list encoding map}}.
\end{enumerate} 
\item \label{axiom_L} Let $\mathcal{L} $ be a first order
language, then there is a distinguished encoding of
$\mathcal{L} $ with the following property. 
There is a total computable map:
   \begin{equation*}
      \Phi: L (\mathcal{L} ) \times \mathbb{N} \to
			\mathcal{L}, 
   \end{equation*}
  s.t. for each $l \in L 
  (\mathcal{L}) $, $\Phi (\{l\} \times  \mathbb{N}  
  ) $ is the deductive closure of the theory $F _{l}
	= \image l$. From now such an $\mathcal{L} $ is assumed to
	have such a distinguished encoding.
\end{enumerate}
\end{proposition}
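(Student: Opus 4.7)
The plan is to verify each of the five items by fixing coherent encoding choices and appealing to standard results from classical recursion theory.

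For (1), I fix a computable Cantor pairing $\pi: \mathbb{N} \times \mathbb{N} \to \mathbb{N}$ together with its computable inverses $\pi_1, \pi_2$, and define $e_{A \times B}(a,b) := \pi(e_A(a), e_B(b))$. Injectivity is immediate; the image is computable because membership is decided by running $\pi_1, \pi_2$ and testing whether the results lie in $e_A(A)$ and $e_B(B)$ respectively (both computable by hypothesis). The three sub-properties then reduce to diagrammatic composition with the Turing machines for $\pi, \pi_1, \pi_2$. For (2), fix any standard effective G\"odel numbering of Turing machines to define $e_{\mathcal{T}}$; the computability of $U$ is the classical existence of a universal Turing machine. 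Part (3) is exactly the statement of Kleene's $s$-$m$-$n$ theorem: given a machine computing $f: A \times B \to C$, hard-coding $e_A(a)$ as its first input yields effectively a machine computing $f^a$, and this effective procedure is itself computable as a function $A \to \mathcal{T}$.

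For (4), encode each list $l: \{0, \ldots, n\} \to A$ via iterated pairing on $e_A$-codes (or via G\"odel's $\beta$-function), so that $\length$ and the projection $P$ are primitive recursive. The computability criterion (c) follows by iteration bounded by $\length(f(a))$: we build up the list code of $f(a)$ one entry at a time using a computable append operation derived from $\pi$. For (5), fix a standard G\"odel encoding of the syntax of $\mathcal{L}$. Given $l \in L(\mathcal{L})$ coding the theory $F_l = \image(l)$, a uniform enumerator $\Phi(l, \cdot)$ of the deductive closure of $F_l$ is obtained by searching through pairs (proof candidate, formula) in some fixed decidable proof calculus (Hilbert-style, say), outputting the formula whenever the candidate is a valid derivation from the axioms listed by $l$; uniformity in $l$ is immediate from decidability of proof-checking relative to a finite axiom set, together with the fact that the entries and length of $l$ are effectively accessible from its code by part (4).

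The main obstacle is not conceptual but bookkeeping: one must check that the choices made in (1), (2), and (4) are mutually compatible, so that all the stated computability conditions hold together --- in particular, that the list encoding of (4) interacts correctly with the formula encoding of (5), and that $U$ in (2) is genuinely a morphism in $\mathcal{S}$ with respect to the product encoding of (1). These verifications are routine once the encodings are fixed, but there are many of them; the proposition is best viewed as consolidating the standard encoding lemmas into a single reusable package for the arguments that follow.
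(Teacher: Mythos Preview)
Your proposal is correct and follows essentially the same approach as the paper: both treat the proposition as a consolidation of standard recursion-theoretic lemmas (pairing, universal machine, $s$-$m$-$n$, sequence coding, proof enumeration) and merely sketch the verification. The only differences are cosmetic encoding choices---the paper uses prime-power encodings $e_{A\times B}(a,b)=2^{e_A(a)}\cdot 3^{e_B(b)}$ and $e_{L(A)}(l)=\prod_i p_i^{e_A(l(i))}$ where you use Cantor pairing and iterated pairing---which have no bearing on the argument.
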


\begin{lemma}
   \label{lemma:lists}
If $f: A \to B$ is computable 
then the map $L(f): L (A) \to L (B) $, $$l \mapsto 
\begin{cases}
   i \mapsto f(l (i)), &\text{ if $f(l (i)) $ is 
   defined for all $0 \leq i \leq \length (l) $  } \\
   undefined, & \mbox {otherwise}, 
\end{cases}
$$ is computable. Also, 
the map $LU: \mathcal{T} \times L (\mathcal{U}) 
\to L (\mathcal{U}) $, $$l \mapsto  
\begin{cases}
i \mapsto U (T, (l (i))), &\text{ if $U (T, (l (i))) $  is 
   defined for all $0 \leq i \leq \length (l) $}\\
undefined, &\text { otherwise}
\end{cases}  $$  is computable.
\end{lemma}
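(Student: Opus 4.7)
The plan is to invoke axiom (\ref{axiom:lists}) of Proposition \ref{prp_propertiesOfS}, which reduces the computability of a partial map into $L(B)$ to the computability of two subsidiary partial maps: the pointwise ``position'' map and the length map.

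For the first assertion, fix a Turing machine $T_f \in \mathcal{T}$ computing $f$. I would verify (i) that $L(A) \times \mathbb{N} \to B$, $(l,n) \mapsto P(L(f)(l),n)$ is computable, and (ii) that $L(A) \to \mathbb{N}$, $l \mapsto \length(L(f)(l))$ is computable. On the common domain consisting of lists $l$ such that $f(l(i))$ is defined for every $i \in [0,\length(l)]$, these equal $f(l(n))$ and $\length(l)$ respectively. To realize them, I would exhibit a Turing machine that on encoded input $(l,n)$ (resp. $l$) first computes $\length(l)$ (computable by axiom (\ref{axiom:utility})), then sequentially simulates $T_f$ on $l(0),\dots,l(\length(l))$ via the universal Turing machine of axiom (\ref{axiom:universal}), and only after all these subsimulations have halted outputs $f(l(n))$ (resp.\ $\length(l)$). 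Divergence of any subsimulation propagates to divergence of the whole, giving the correct partial-function semantics. Applying axiom (\ref{axiom:lists}) then yields computability of $L(f)$.

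For the second assertion the same strategy applies, with $U:\mathcal{T}\times\mathbb{N}\to\mathbb{N}$ in place of $f$. The only new ingredient is that $T$ enters as a variable input rather than being fixed; the two subsidiary maps become $(T,l,n)\mapsto U(T,l(n))$ and $(T,l)\mapsto\length(l)$, each guarded by the convergence of $U(T,l(i))$ for all $i\in[0,\length(l)]$. Axioms (\ref{axiom:product}) and (\ref{axiom:universal}) package the $T$-dependence into a computable map $\mathcal{T}\times\mathbb{N}\to\mathbb{N}$, after which the same ``simulate all, then emit'' construction works, and axiom (\ref{axiom:lists}) delivers computability of $LU$.

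The only real subtlety—and the main obstacle if one wants a fully formal deduction from the axioms rather than a description of the witnessing Turing machine—is respecting the partiality in the definitions of $L(f)$ and $LU$: the constructed maps must diverge precisely when some $f(l(i))$ (or $U(T,l(i))$) diverges, not merely when the $n$-th entry diverges. The loop that simulates all indices $i\in[0,\length(l)]$ before returning the $n$-th value is designed exactly to enforce this.
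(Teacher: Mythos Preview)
Your proposal is correct and follows exactly the approach the paper intends: the paper's own proof simply says ``This is just a straightforward application of the proposition and part \ref{axiom:utility} in particular. We leave the details as an exercise,'' and you have carried out precisely that exercise via axiom \ref{axiom:lists}, including the key point about enforcing divergence on the whole computation when any $f(l(i))$ diverges.
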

\begin{proof}
This is just a straightforward application of  
the proposition and part \ref{axiom:utility} in particular. We leave the details as an 
exercise.
\end{proof}

 % The last Axiom \ref{axiom_L}, in the case of $\mathcal{A} $    The only
% naturality property for $\mathcal{A} $ that  is that
% certain natural maps of the type \eqref{eq_translations} are computable. This is
% satisfied for the standard encoding.
% For $\mathbb{Z}$ we also want the
% map $$\mathbb{Z} \to \mathbb{Z} \quad n \mapsto -n
% $$ to complete to a primitive recursive morphism in $\mathcal{S}$. 

\begin{proof} [Proof of Proposition \ref{prp_propertiesOfS}]
As this is just an elaboration on classical theory we only
sketch the proof. To prove the first part,  given $(A,
e _{A}),(B, e _{B})$ the encoding map $e _{A \times
B}: A \times B \to \mathbb{N} $ can be taken to be the map $(a, b) \mapsto 2 ^{e _{A} (a)} \cdot {3} ^{e _{B} (b)}$. The needed properties readily follow.

The second part is just the classical story of the universal
Turing machine. 

The third part corresponds to the
``s-m-n theorem'' Soare~\cite[Theorem 1.5.5]{cite_Soare},
which works as follows. Given a classical 2-input Turing machine $$T: 
\mathbb{N} \times \mathbb{N} \to \mathbb{N}, $$ 
there is a Turing machine $s _{T}: \mathbb{N} \to 
\mathbb{N} $ s.t. for each $m$ $s _{T} (m) 
$ is the Turing-G\"odel encoding natural, of a Turing 
machine computing the map $f ^{m}$: $n \mapsto T 
(m, n) $.

For the fourth part we can just explicitly
construct the needed encoding, by setting $e _{L (A)}$ to be
the map $$l \mapsto 2 ^{e _{A}
(l (0))} \cdot \ldots \cdot p _{n} ^{e _{A} (l (n))},$$
where $l: \{0, \ldots, n\} \to A $ and $p _{n}$ is the $n$'th prime.

The last part follows by basic theory of first order logic
and the previous parts.
\end{proof}
\begin{remark} \label{rem_}
The above properties suffice for our purposes. As mentioned
$\mathcal{S} $ in fact has all finite limits and colimits,
this is proved analogously. For example the sum (coproduct)
of $(A, e _{A})$ and $(B, e _{B})$, can be given by $(A
\sqcup B, e _{A
\sqcup B}) $ where 
$$e _{A \sqcup B} (x) = \begin{cases}
	2 ^{e _{A} (x)}, &\text{ if } x \in A\\
	3 ^{e _{B} (x)}, & \text{ if } x \in B. 
\end{cases}
$$
$\mathcal{S} $ having finite products and
sums is in part what it makes it possible to have a computer programming language with algebraic data types, e.g. Haskell.
Haskell also has more general,
finite ``colimit, limit'' data types. 
\end{remark}

\subsection {Some preliminaries on first order theories} \label{sec_firstorder}
Let $\mathcal{L} $ be a first order language. An $\mathcal{L}$-theory $F$ is a subset $F \subset \mathcal{L} $. $F$ will be called \emph{deductively complete}  if it is closed under inference, that is if $F \vdash \alpha $ then $\alpha \in F$. Denote by $\overline{F} $ the deductive closure of $F$. 
$$\overline{F} = \cap \{H \subset \mathcal{L} \,|\,
H \text{ is deductively complete and } F \subset H \}.
$$ 
% TODO: {define vdash}
We will need a notion of one first order theory interpreting
another first order theory, possibly in a different
language. 

% Let $\mathcal{L}, \mathcal{L}' $ be a pair of first order
% languages. A \textbf{\emph{translation of
% $\mathcal{L}' $ to $\mathcal{L} $}} is a set
% embedding $emb: \mathcal{L}' \to \mathcal{L} $, with no
% other conditions. We do not assume
% that the embedding map is computable.
\begin{definition}\label{def_translation} Given first order languages
$\mathcal{L}, \mathcal{L}'$, a \textbf{\emph{translation}}
is a total set embedding $i: \mathcal{L}' \to \mathcal{L}$ such
that 
\begin{enumerate}
	\item $i$ preserves the logical operators $\land, \lor,
	\neg$. (For example, $i (\neg \alpha \land  \beta
	) = \neg i (\alpha ) \land i (\beta) $). 
	\item
	\begin{equation*} (\forall \alpha \in \mathcal{L}' \;
	\forall S \subset \mathcal{L}') \; S \vdash \alpha
	\implies i (S) \vdash i (\alpha ). 
	\end{equation*}
\end{enumerate}
\end{definition}
% If $i$ also behaves well with respect to quantifiers, then it
% can be shown to be computable, but in general one can construct
% non-computable translations of $\mathcal{A} $ to $\mathcal{A} $.

Here is one well understood example.
\begin{example} \label{exm_}
If $\mathcal{Z}$ denotes the first order language of set
theory and $\mathcal{A}$ the first order language of
arithmetic as in the introduction, then there is a translation
$i _{\mathcal{A}, \mathcal{Z}}: \mathcal{A}  \to
\mathcal{Z}$.  This map assigns to numerals in the
language of arithmetic the Von Neumann naturals, e.g. we
assign $\emptyset $ to $0$.   In this example, assuming
standard encodings of $\mathcal{A}, \mathcal{Z} $, $i$ is
computable.
\end{example}
If $i$ as above is computable we call $i$
a \textbf{\emph{computable translation map}}. The following,
weakens this notion.
\begin{definition}\label{def_good}
 For
a translation $i: \mathcal{A} \to \mathcal{L} $, if the 
restriction of $i$ to the subset of $\Sigma ^{0} _{n}$
formulas is computable we say that $i$ is
a \textbf{\emph{$n$-translation}}. Likewise, for
a translation $j: \mathcal{Z} \to \mathcal{L} $  we say it is
a $n$-translation, if the induced translation $j \circ i _{\mathcal{A},
\mathcal{Z} }: \mathcal{A} \to
\mathcal{L} $, where $i _{\mathcal{A}, \mathcal{Z} }$ is as
above, is a $n$-translation.
\end{definition}

\begin{definition}
\label{not_induced} Given a translation $i: \mathcal{L}'
\to \mathcal{L} $ and an $\mathcal{L} $-theory $F$, we set $$F
_{i, \mathcal{L}'} = i ^{-1} (\overline{F}) \subset
\mathcal{L}'. $$ To paraphrase, this is set of $\mathcal{L}'
$-sentences proved by $F$ under the given translation.
\end{definition}

The following is immediate from the definitions.
\begin{lemma} \label{lem_} For $\mathcal{L}, \mathcal{L}',
i, F$ as above $F _{i,\mathcal{L}'}$ is a deductively closed
theory.
\end{lemma}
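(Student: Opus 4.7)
The plan is to unpack the definitions and apply property (2) of a translation directly. Suppose $F_{i,\mathcal{L}'} \vdash \alpha$ for some $\alpha \in \mathcal{L}'$; I want to conclude $\alpha \in F_{i,\mathcal{L}'}$, which by definition means $i(\alpha) \in \overline{F}$.

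First I would observe that by the definition $F_{i,\mathcal{L}'} = i^{-1}(\overline{F})$, we automatically have the inclusion $i(F_{i,\mathcal{L}'}) \subset \overline{F}$. Next, applying clause (2) of Definition \ref{def_translation} to the hypothesis $F_{i,\mathcal{L}'} \vdash \alpha$ (with $S = F_{i,\mathcal{L}'}$), we obtain $i(F_{i,\mathcal{L}'}) \vdash i(\alpha)$. Since $\overline{F}$ contains $i(F_{i,\mathcal{L}'})$, monotonicity of $\vdash$ gives $\overline{F} \vdash i(\alpha)$, and because $\overline{F}$ is deductively closed (being the deductive closure of $F$, i.e., the intersection of all deductively closed supersets), this yields $i(\alpha) \in \overline{F}$. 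Hence $\alpha \in i^{-1}(\overline{F}) = F_{i,\mathcal{L}'}$, as required.

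The argument is essentially a definition chase with no genuine obstacle; the only subtle point is invoking property (2) of a translation in the correct direction — from $S \vdash \alpha$ in $\mathcal{L}'$ to $i(S) \vdash i(\alpha)$ in $\mathcal{L}$ — rather than attempting to lift a proof backwards through $i$, which the definition does not guarantee. Because of this, no assumption about surjectivity of $i$ or about how $i$ interacts with quantifiers is needed for the lemma.
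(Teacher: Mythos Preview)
Your proof is correct and is precisely the definition chase the paper has in mind; the paper itself offers no written proof beyond the remark ``immediate from the definitions,'' and your argument is exactly the unpacking of that remark.
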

%.
% Note that $F _{\mathcal{L}'}$ may not be $\mathcal{L}'
% $-definable even if $F$ is c.e., because $emb$ is
% not assumed to be computable.
\begin{definition}\label{def_interprets}
For a given translation $i: \mathcal{L}' \to  \mathcal{L}
$, given an $\mathcal{L} $-theory $F$ and a sentence $\alpha
\in \mathcal{L}'$,  we write $F \vdash ^{i}  \alpha$ if
\begin{equation*}
F _{i,\mathcal{L}'} \vdash \alpha. 
\end{equation*}
Likewise, we write $F \vdash ^{i}  F'$  if
\begin{equation*}
F _{i,\mathcal{L}'} \vdash F'. 
\end{equation*}
Whenever, there exists an $i$ s.t. $F \vdash ^{i}  F'$, we say that $F$ \textbf{\emph{interprets}} $F'$. 
\end{definition}
The following is also immediate from definitions.
\begin{lemma} \label{lem_inducedinterpretation} Given first
order languages $\mathcal{L} _{0}, \mathcal{L} _{1},
\mathcal{L}  _{2} $ and an $\mathcal{L} _{2}$-theory $F
_{2}$. If $F _{2}$ interprets $ F _{1}$ and $F _{1}$
interprets  $F _{0}$ then $F _{2}$ interprets $F _{0}$.
\end{lemma}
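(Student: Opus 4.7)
The plan is to take the composite translation as the witness. Given $i_{1}: \mathcal{L}_{1}\to \mathcal{L}_{2}$ with $F_{2}\vdash^{i_{1}} F_{1}$, and $i_{0}: \mathcal{L}_{0}\to \mathcal{L}_{1}$ with $F_{1}\vdash^{i_{0}} F_{0}$, I would define $j := i_{1}\circ i_{0}: \mathcal{L}_{0}\to \mathcal{L}_{2}$ and verify that $j$ is itself a translation and that $F_{2}\vdash^{j} F_{0}$. The verification that $j$ is a translation in the sense of Definition \ref{def_translation} is routine: $j$ is a total injection because it is the composition of two total injections; $j$ preserves $\land, \lor, \neg$ because both $i_{0}$ and $i_{1}$ do; and $j$ preserves deducibility because, given $S\vdash \alpha$ in $\mathcal{L}_{0}$, we apply the deducibility clause for $i_{0}$ to get $i_{0}(S)\vdash i_{0}(\alpha)$, then the deducibility clause for $i_{1}$ to get $j(S) = i_{1}(i_{0}(S))\vdash i_{1}(i_{0}(\alpha)) = j(\alpha)$.

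The core content is the membership chase for $F_{2}\vdash^{j} F_{0}$, i.e.\ $F_{0}\subset \overline{j^{-1}(\overline{F_{2}})}$. Fix $\alpha \in F_{0}$. By $F_{1}\vdash^{i_{0}} F_{0}$ we have $i_{0}^{-1}(\overline{F_{1}})\vdash \alpha$, so by the deducibility clause for $i_{0}$, $i_{0}\bigl(i_{0}^{-1}(\overline{F_{1}})\bigr)\vdash i_{0}(\alpha)$. Since $i_{0}\bigl(i_{0}^{-1}(\overline{F_{1}})\bigr)\subset \overline{F_{1}}$, we get $i_{0}(\alpha)\in \overline{F_{1}}$. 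Next, $F_{2}\vdash^{i_{1}} F_{1}$ means $\overline{F_{1}}\subset \overline{i_{1}^{-1}(\overline{F_{2}})}$ (here I use that deductive closure is idempotent and that $\overline{F_{1}}$ is the smallest deductively closed set containing $F_{1}$), so $i_{0}(\alpha)\in \overline{i_{1}^{-1}(\overline{F_{2}})}$. Applying the deducibility clause for $i_{1}$ and the same containment trick yields $j(\alpha) = i_{1}(i_{0}(\alpha))\in \overline{F_{2}}$, i.e.\ $\alpha \in j^{-1}(\overline{F_{2}})$, as required.

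There is no real obstacle here; the only mild care needed is keeping straight the distinction between $F_{i,\mathcal{L}'} = i^{-1}(\overline{F})$ and its deductive closure, and using Lemma \ref{lem_} (that $F_{i,\mathcal{L}'}$ is already deductively closed) to avoid a proliferation of bars. The lemma is essentially a formal exercise expressing that ``translation'' composes and that pullback commutes with the two steps of deductive closure.
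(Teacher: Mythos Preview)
Your argument is correct. The paper itself offers no proof beyond the phrase ``also immediate from definitions,'' so your proposal is simply the honest unpacking of that remark: take $j=i_{1}\circ i_{0}$, check it is a translation, and chase $\alpha\in F_{0}$ through to $j(\alpha)\in\overline{F_{2}}$. One minor simplification: once you invoke Lemma~\ref{lem_} to know that $i_{1}^{-1}(\overline{F_{2}})$ is already deductively closed, the step from $i_{0}(\alpha)\in\overline{i_{1}^{-1}(\overline{F_{2}})}=i_{1}^{-1}(\overline{F_{2}})$ to $j(\alpha)\in\overline{F_{2}}$ is just the definition of preimage---no second application of the deducibility clause is needed.
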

% For example, if $F$ interprets  $ZFC$ then $F$ interprets  $Q$.
Let $\mathcal{F}_{0}$ denote the set of $\Sigma ^{0}
_{0}$ formulas of arithmetic $\mathcal{A} $ with one
free variable. 

\begin{definition} [cf. ~\cite{cite_fefermanconsistency}] \label{def:1-consistent}
Given a first order theory $F$ in any language $\mathcal{L} $, we say that $F$
\textbf{\emph{$1$-consistent relative to the translation
$i$}}  if: 
\begin{enumerate}
	\item $F \vdash ^{i} Q$ as in Definition \ref{def_interprets}.
\item For any formula $\phi \in \mathcal{F} _{0} $  the following holds:
% TODO: {FA}
$$F \vdash ^{i}  \exists m \; \phi (m)) \implies 
(\exists m \; F \nvdash ^{i} \neg \phi (m)).
% \exists m \, \neg (F \vdash \neg \phi (m))).
$$  
% Here $F _{\mathcal{A} }$ is as in Notation
% \ref{not_induced}.
% In other words, this says that $F _{\mathcal{A} }$ is
% 1-consistent.
\end{enumerate}
We say 
that $F$ is \textbf{\emph{$2$-consistent relative to the
translation $i$}} if the 
same holds for $\Pi _{1} ^{0} $ formulas $\phi$ with 
one free variable, more specifically formulas $\phi=\forall
n \, g (m,n)$, with $g$ $\Sigma ^{0} _{0}$.
% We say that $F$ is \textbf{\emph{$1$-consistent}} if it is
% $1$-consistent relative to every $i$.  Similarly, with
% $2$-consistent.
% Note that with the assumption $F \supset {Q}$  the
% latter is equivalent to 
% $$(F  \vdash \exists m \; \phi (m)) \implies 
% (\exists m \; F \vdash \phi (m)).$$
\end{definition}
From now on $n$-consistency is always with respect to some
implicit translation $i$, that should be clear from the
context, and so may not be denoted. 

\begin{definition} \label{def_stronglyconsistent}
Suppose we are given a theory $F$ in some language
$\mathcal{L} $, such that $F \vdash ^{i} ZFC$.  Then we say
that it is \textbf{\emph{strongly consistent}} if there is
a `standard model' $M$ for $F _{i, \mathcal{Z}}$. More
specifically, we suppose that $M$ is
a substructure of $V _{\kappa}$ for $V _{\kappa}$ some
stage in the Von Neumann hierarchy.  
\end{definition}

\section {Stable computability and decision maps} 
\label{section:fundamentalSoundness} In this 
section, general sets, often denoted as $B$, are 
intended to be objects of $\mathcal{S} $ with an implicit
encoding map (sometimes made explicit). All maps are partial maps, unless specified otherwise.
The set $\{\pm \}$ is always understood to be with the fixed
encoding map $e _{\{\pm \}} (-) = 0$, $e _{\{\pm \}} (+)
= 1$.
\begin{definition} \label{def:Mstable}
Given a map: \begin{equation*}
M: \mathbb{N} \to B \times \{\pm\},
\end{equation*}
% Let $S (b)$ denote the set of all $m \in \mathbb{N}$ s.t. $M (m) = (b,+)$ or $M (m) = (\Sigma, -)$, this set is of course naturally ordered.
We say that \textbf{\emph{$b \in B$ is $M$-stable}} if 
there is an $m$ with $M (m) = (b,+)$ 
and there is no $n>m$ with $M (n) =  (b,-)$.
%    \item $S (b)$ is infinite, and the natural density of the subset $$S ^{-}  (b) := \{n \in S (b)|M (n) = (b,-)\} \subset S (b)$$ is zero. 
% With this being as in Definition \ref{def:main}.
% \end{itemize}
\end{definition}
  % \begin{definition}
% We say that $A$ is \textbf{\emph{stably sound}}  if every  $A$-stable $\Sigma$ is true. 
% \end{definition}
\begin{definition} \label{def:stabilization} Given a map $$
M: \mathbb{N} \to B \times \{\pm\},$$ we define 
$$M ^{s} \subset  B $$ to be the set of all the $M$-stable $b$. We 
call this the \textbf{\emph{stabilization of $M$}}. When $M$
is computable, that is furnishes a morphism in $\mathcal{S}
$,  we say that $S \subset B$ is 
\textbf{\emph{stably c.e.}} if $S = M ^{s} $. 
We say that $T \in \mathcal{T} $ \textbf{\emph{stably
computes}} $ M: \mathbb{N} \to B \times \{\pm\},$ if it
computes $N: \mathbb{N} \to  B \times \{\pm \}$, s.t. $M ^{s}
= N ^{s}$.
\end{definition}
In general $M ^{s} $ may not be computable even if $M$ is computable. 
Explicit examples of this sort can be readily 
constructed as shown in the following.
\begin{example} \label{remark:dio} 
Let $Pol$ denote the set of all Diophantine 
polynomials, with a distinguished encoding map whose properties will
be specified shortly. 
We can construct a total computable map $$A: 
\mathbb{N} \to Pol \times \{\pm \} $$ whose 
stabilization consists of all Diophantine 
(integer coefficients) polynomials with no integer 
roots. 
% Similarly, we can construct a computable 
% map $D$ whose stabilization consists of pairs $(T,n)$ for $T: \mathbb{N} \to \mathbb{N}$ a Turing 
% machine and $n \in \mathbb{N}$ such that $*T (n)$ 
% does not halt.   

Fix a distinguished encodings of $Pol, \mathbb{Z} $  
so that the map $$E: \mathbb{Z} 
\times Pol \to \mathbb{Z}, \quad (n,p) \mapsto p 
(n)$$ is computable.  Let $$Z: \mathbb{N} \to Pol, \quad N:
\mathbb{N} \to \mathbb{Z}$$ be any total bijective computable maps. 

In what follows, for each $n \in \mathbb{N}$,  $A 
_{n} \in L (Pol \times \{\pm\}) $. $\cup$ will be 
here and elsewhere in the paper the natural list 
union operation. More specifically, if $$l _{1}: 
\{0, \ldots, n\} \to B, \quad l _{2}: \{0, \ldots, m\} 
\to B$$ are two lists then $l _{1} 
\cup l _{2}$ is defined by:
\begin{equation} \label{eq:listunion}
   l _{1} \cup l _{2} (i) = 
   \begin{cases}
      l _{1} (i) , &\text{ if $i \in \{0, \ldots, 
      n\}$  }\\
      l _{2} (i -n -1) , &\text{ if $i \in \{n+1, \ldots, 
      n+m+1 \}$ }.
   \end{cases}
\end{equation}
If $B \in \mathcal{S} $, it is easy to see that $$\cup: L (B) \times L (B) \to L (B) , \quad (l,l') \mapsto l \cup l' $$ is 
computable, given that we are using the list encoding map
for $L (A)$, as in part \ref{axiom:lists} of Proposition
\ref{prp_propertiesOfS}.

For $n \in \mathbb{N} $ define $A _{n}$ recursively by: $A _{0}:=\emptyset,$ 
\begin{align*} A _{n+1}& := A _{n}  \cup  \bigcup _{m=0} ^{n}  (Z (m), d ^{n}   (Z (m))), \\
   & \mbox{ where  $d ^{n} (p) = +$ if none of 
   $\{N(0), \ldots, N(n)\}$ are roots of  $p$, $d 
   ^{n}  (p)= -$ otherwise}.
 \end{align*}
% \item Set $L:=L ^{n+1} $, an ordered list with order starting at $0$.  
% Using the axioms of $\mathcal{S} $ and the 
% recursive program above we get a computable map 
% $$M: \mathbb{N} \to L (Pol \times \{\pm \} ).  $$ 
% We then set $A$ to be composition $$\mathbb{N} 
% \xrightarrow{M \times id} L (Pol \times \{\pm \})  
% \times \mathbb{N} \xrightarrow{P} \to Pol \times 
% \{\pm \}.   $$ 
Define $A (n):=A _{n+1} (n) $. Note that $$(\forall n \in \mathbb{N}) \; A _{n+1}| 
_{\domain A _{n}} = A _{n},  \text{ and }  \length (A 
_{n+1})  > \length (A _{n}), $$ so that with this definition $A 
(\mathbb{N}) = \cup _{n \in \mathbb{N} } \image (A 
_{n}).$ 

Since $E$ is computable, utilizing the
recursive program above and Proposition
\ref{prp_propertiesOfS}, it can be
readily verified that the map $A$ is computable. Moreover, by 
construction the stabilization $A ^{s}$ 
consists of all Diophantine polynomials that have no integer roots.  
\end{example} 
\subsection {Decision maps} \label{sec_decisionmaps}
By a \textbf{\emph{decision map}}, we mean a 
map of the form:
$$D: B \times \mathbb{N}  \to \{\pm\}.
$$ 
This kind of maps will play in the 
incompleteness theorems, and we now 
develop some of their theory.

% \{(T',T' _{e}) \in \mathcal{T} _{\mathcal{S}}| T' \in Maps 
% (B \times \mathbb{N} \to \{\pm \} )   \}.$$
\begin{definition}\label{def:calDB}
Let $B \in \mathcal{S} $, define
$\mathcal{D}_{B}$ to be the set of $T \in  \mathcal{T}$ s.t.
exists $T': B \times \mathbb{N} \to \{\pm\}  $, and 
a commutative diagram:
\begin{equation*}
\begin{tikzcd}
B \times \mathbb{N}  \ar[r, "T'"] \ar [d, "e _{B \times \mathbb{N}}"] &  \{\pm \} \ar [d,"e _{\{\pm \}}"] \\ 
\mathbb{N} \ar [r, "T"]    &  \mathbb{N}.
\end{tikzcd}
\end{equation*}
\end{definition}
More concretely, this is the set of $T$ s.t.:  $$(\forall n \in 
\image e_{B \times \mathbb{N}} \subset \mathbb{N}) \; (T (n) \in \image e _{B \times \{\pm \}} \text{ or $T (n)$ is
undefined}.)$$

As $e _{\{\pm \}}$ is injective, $T'$ above is uniquely
determined \emph{if}  it exists. From now on, 
for $T \in \mathcal{D} _{B}$, 
when we write $T'$ it is meant to be of the form 
above.

First we will explain  one construction of elements of $\mathcal{D} 
_{B}$, from  Turing machines of the following form.
% Let $S (b)$ denote the set of all $m \in \mathbb{N}$ s.t. $D (b,m) = (b,+)$ or $M (m) = (\Sigma, -)$, this set is of course naturally ordered.
% Given such a $D$ define $$S _{D}  (b)  \,= \{m \in 
% \mathbb{N}: (D (b,m) = -) \lor (D (b,m) = +)  
% \}, $$ and set $S _{D} ^{-} (b) := \{n \in S 
% _{D}  (b): D (b,n) = -\}.  $

\begin{definition}\label{def:T_B}
Let $B \in \mathcal{S} $. Define
$\mathcal{T}_{B}$ to be the set of $T \in  \mathcal{T}$ s.t.
exists $T': \mathbb{N} \to B \times \{\pm \}$, and 
a commutative diagram:
\begin{equation*}
\begin{tikzcd}
\mathbb{N}  \ar[r, "T'"] \ar [d, "id"]
&  B \times \{\pm \} \ar [d,"e _{B \times \{\pm \}}"] \\ 
\mathbb{N} \ar [r, "T"]    &  \mathbb{N}.
\end{tikzcd}
\end{equation*}
\end{definition}

From now on, given $T \in \mathcal{T} 
_{{B}}$, if we write $T'$ then it is will be 
assumed to be of the form above. As before, it is uniquely
determined when exists.
% \begin{definition}\label{def:}
%    We say that an encoded map $f: A \to B$ is 
%    \textbf{\emph{primitive recursive}}  if the map 
%    $e _{B} \circ f \circ e _{A} ^{-1}$ has an 
%    expression as a primitive recursive function. 
% \end{definition}
\begin{lemma} \label{lem_inverse} For $(A,e _{A}) \in
\mathcal{S} $ define the map:
\begin{equation} \label{eq_inverse}
e _{A} ^{-1}: \mathbb{N} \to A,
\end{equation}
by $$e _{A} ^{-1} (n) = \begin{cases}
	e ^{-1} _{A} (n), &\text{ if $n \in \image e _{A}$} \\
	\text{undefined}, & \text{ otherwise}. 
\end{cases}$$
Then $e _{A} ^{-1}$ is computable.
\end{lemma}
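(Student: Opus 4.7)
The plan is to construct the required Turing machine $T$ directly from the definition of an encoding map. Recall that the computability of $e_A^{-1}: \mathbb{N} \to A$ with respect to the encoding $\mathrm{id}$ on the source and $e_A$ on the target means exhibiting some $T \in \mathcal{T}$ fitting into a commutative square
\begin{equation*}
\begin{tikzcd}
\mathbb{N} \ar[r, "e_A^{-1}"] \ar[d, "\mathrm{id}"] & A \ar[d, "e_A"] \\
\mathbb{N} \ar[r, "T"] & \mathbb{N},
\end{tikzcd}
\end{equation*}
i.e.\ $T(n) = e_A(e_A^{-1}(n))$ with both sides simultaneously defined or undefined. For $n \in \image e_A$ one has $e_A(e_A^{-1}(n)) = n$, while for $n \notin \image e_A$ the left side is undefined. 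So what we need is a Turing machine that halts with output $n$ precisely when $n \in \image e_A$, and diverges otherwise.

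The existence of such a machine is exactly the content of the hypothesis in the definition of an encoding map: the set $A_e = e_A(A) \subset \mathbb{N}$ is \emph{computable} (recursive), so both $A_e$ and its complement are computably enumerable, and in particular there is a total computable characteristic function $\chi_{A_e}: \mathbb{N} \to \{0,1\}$. I would then define $T$ as the following composite procedure: on input $n$, first run the machine computing $\chi_{A_e}(n)$; if the answer is $1$, halt and output $n$; if the answer is $0$, enter an infinite loop. This $T$ manifestly belongs to $\mathcal{T}$, and by construction its associated partial function satisfies $T(n) = n$ for $n \in \image e_A$ and is undefined otherwise, which is precisely the commutativity we needed.

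There is no real obstacle here; the lemma is essentially just unpacking the definition of ``encoding map.'' The only mild subtlety is the bookkeeping: one must notice that the target encoding of $A$ in the definition of morphism in $\mathcal{S}$ forces us to compare $T(n)$ with $e_A(e_A^{-1}(n)) = n$ rather than with $e_A^{-1}(n)$ itself, so that the required machine is the partial identity on $A_e$ rather than anything that actually ``inverts'' $e_A$ at the level of natural numbers. Once this is seen, the recursiveness of $A_e$ closes the argument immediately.
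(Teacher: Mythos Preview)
Your proof is correct and follows essentially the same approach as the paper: both arguments observe that the required $T$ in the commutative square is the partial identity on $\image e_A$, and both appeal to the decidability (recursiveness) of $\image e_A$ to conclude that such a $T$ exists. You simply spell out the construction of $T$ from the characteristic function $\chi_{A_e}$ in slightly more detail than the paper does.
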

\begin{proof} [Proof]
Let $T \in \mathcal{T}$ compute the map $\mathbb{N} \to
\mathbb{N} $ defined by $$n
\mapsto   \begin{cases}
	n, &\text{ if $n \in \image e _{A}$} \\
	\text{undefined}, & \text{ otherwise}. 
\end{cases}$$
As $\image e _{A}$ is decidable $T$ does exist.

Then
clearly we have a commutative diagram:
\begin{equation*}
\begin{tikzcd}
\mathbb{N} \ar[r, "e _{A} ^{-1}"] \ar [d, "id"] &  A \ar [d,"e _{A}"] \\
\mathbb{N}  \ar [r, "T"]   & \mathbb{N},
\end{tikzcd}
\end{equation*}
so that $e _{A} ^{-1}$ is computable.
\end{proof}

\begin{lemma} \label{lemma:TuringK} Let $(B, e _{B}) \in \mathcal{S} $.
   There is a computable total map $$K = K _{(B, e _{B})}: 
   \mathcal{T} \to \mathcal{T},$$ with the 
   properties:
   \begin{enumerate}
      \item For each $T$, $K  (T) \in 
      \mathcal{T} _{B }$. 
      \item If $T \in \mathcal{T} _{B }$ then  $K 
      (T) $ and $T$ compute the same maps $ 
      \mathbb{N} \to B \times \{\pm \} $.   
   \end{enumerate}
\end{lemma}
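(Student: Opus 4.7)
The plan is to build $K(T)$ by taking the Turing machine $T$ and running its output through a decidable filter that ignores (i.e.\ loops on) any natural number not in $\image e_{B \times \{\pm\}}$, then to package the construction so that it depends computably on $T$ via the s-m-n theorem, i.e.\ part \ref{axiom:split} of Proposition \ref{prp_propertiesOfS}.

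First I would define the auxiliary map
\begin{equation*}
f: \mathcal{T} \times \mathbb{N} \to \mathbb{N}, \qquad
f(T,n) = \begin{cases} U(T,n), & \text{if $U(T,n)$ is defined and $U(T,n) \in \image e _{B \times \{\pm\}}$}, \\ \text{undefined}, & \text{otherwise}, \end{cases}
\end{equation*}
and verify it is computable. This uses part \ref{axiom:universal} (the universal map $U$ is computable), the fact that $\image e _{B \times \{\pm\}} \subset \mathbb{N}$ is decidable (this is built into the definition of an encoding map, since $A _{e}$ is required to be recursive), and Lemma \ref{lem_inverse} together with the product structure from part \ref{axiom:product} to compose these pieces. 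Intuitively, one Turing machine first simulates $U$ on $(T,n)$, waits for a halt, and only then writes its output provided the decidability test succeeds; otherwise it goes into an infinite loop.

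Next I would apply part \ref{axiom:split} to the computable $f: \mathcal{T} \times \mathbb{N} \to \mathbb{N}$ (with $A = \mathcal{T}$, $B = \mathbb{N}$, $C = \mathbb{N}$) to obtain a computable map $s: \mathcal{T} \to \mathcal{T}$ such that $s(T)$ computes the partial function $n \mapsto f(T,n)$. I then define $K := s$. Totality is immediate from that of $s$.

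Finally I would verify the two listed properties. For property (1), by construction every defined value of $K(T)$, viewed as a partial function $\mathbb{N} \to \mathbb{N}$, lies in $\image e _{B \times \{\pm\}}$, so the partial function $\mathbb{N} \to B \times \{\pm\}$ given by $n \mapsto e _{B \times \{\pm\}} ^{-1}(K(T)(n))$ is well defined and makes the required diagram commute, witnessing $K(T) \in \mathcal{T} _{B}$. For property (2), if $T \in \mathcal{T} _{B}$ then $T(n)$ is either undefined or already lies in $\image e _{B \times \{\pm\}}$, so $f(T,n) = U(T,n) = T(n)$ for every $n$; hence $K(T)$ and $T$ agree as partial functions $\mathbb{N} \to \mathbb{N}$, and injectivity of $e _{B \times \{\pm\}}$ forces them to compute the same map $\mathbb{N} \to B \times \{\pm\}$. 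The only mildly delicate point is keeping the two layers of encoding straight; once the decidability of $\image e _{B \times \{\pm\}}$ and the s-m-n axiom are in place, everything is routine.
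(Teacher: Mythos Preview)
Your proposal is correct and is essentially the same argument as the paper's: the paper simply writes your filtered map $f$ as the composition $\mathcal{T}\times\mathbb{N}\xrightarrow{U}\mathbb{N}\xrightarrow{e_{B\times\{\pm\}}^{-1}}B\times\{\pm\}$ (using Lemma~\ref{lem_inverse} to build in the ``undefined outside the image'' clause), then applies part~\ref{axiom:split} to obtain $K$, exactly as you do. The only cosmetic difference is that the paper takes the codomain of the two-variable map to be $B\times\{\pm\}$ rather than $\mathbb{N}$, which lets it skip your final decoding step when verifying property~(1).
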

\begin{proof}
Let $G: \mathcal{T} \times \mathbb{N} \to B \times \{\pm \} $  be the composition of the sequence of maps $$
\mathcal{T} \times \mathbb{N} \xrightarrow {U} \mathbb {N} 
\xrightarrow{e _{B \times \{\pm \} } ^{-1}}
B 
\times \{\pm\}. $$  

Hence, $G$ is a 
composition of computable maps and so is computable.  By
Part \ref{axiom:split} of the Proposition
\ref{prp_propertiesOfS},  there is an induced computable 
map $K: \mathcal{T} \to \mathcal{T}  
$ so that for each $T$, $K ({T})$ computes $G ^{T}: \mathbb{N}  \to 
B \times \{\pm \}$, $G ^{T} (n) = G (T,n). $  
By construction, if $T \in \mathcal{T} _{B}$ then 
$T' = (K (T) )' $. So that we are done.
 % Moreover, the specification of the Turing machine 
% $KT _{e}$ encoding $KT$ is constructible from $T$. 
% More specifically, utilizing axioms of 
% $\mathcal{S} $ and Axioms 
% \ref{axiom:slice}, \ref{axiom:universal} in 
% particular, we see that
% \begin{equation*}
%    K _{\mathcal{A}}: \mathcal{T} \to \mathcal{T}, \quad K 
%    _{B} (T) =
%    (K{T})  _{e},
% \end{equation*}
% is computable. In addition, we may clearly suppose that 
% if $T \in \mathcal{T} _{B }$ then 
% $K _{B}({T}) = (KT) _{e} = T$. 
% By construction if $K _{B}(T) \in \mathcal{T} _{B}$, 
% and so $K ^{2} _{B} = K _{B}$.
\end{proof}
\subsubsection{Constructing decision Turing 
machines} % (fold)
\label{sec:Constructing decision Turing machines}
We will need a few
preliminaries as we need to deal with the following issue.
For a given computable $\mathbb{N} \to B \times
\{\pm\}$, we may construct a total computable map with the
same image, but possibly losing the stability conditions. So we
must adjust the totalization procedure to keep track of the stability conditions.

The following is well known:
\begin{lemma} \label{lem_total}
There is a computable map $Tot: \mathcal{T} \to \mathcal{T}
$ s.t. $\forall T \in \mathcal{T} $:
\begin{enumerate}
	\item $Tot (T)$ is total for each $T$.
	\item $\image (Tot (T)) = \image T$.
\end{enumerate}
\end{lemma}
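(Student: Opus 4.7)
The plan is to reduce the construction to a single two-input computable map $F: \mathcal{T} \times \mathbb{N} \to \mathbb{N}$ with the property that, for each $T$ with $\image T \neq \emptyset$, the partial map $n \mapsto F(T,n)$ is total and has image exactly $\image T$. The s-m-n property, i.e.\ part \ref{axiom:split} of Proposition \ref{prp_propertiesOfS}, applied to $F$ then yields a computable $Tot: \mathcal{T} \to \mathcal{T}$ with $Tot(T)$ computing $F(T,\cdot)$.

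To build $F$, I would dovetail via the universal Turing machine $U$ from part \ref{axiom:universal} of Proposition \ref{prp_propertiesOfS}. On input $(T,n)$, simulate $*T(i)$ for $s$ steps for each $i \leq s$, iterating $s = 1,2,3,\ldots$, maintaining a list of discovered outputs of $T$ in order of first discovery; the bookkeeping is computable by repeated appeals to the list operations of part \ref{axiom:utility} of Proposition \ref{prp_propertiesOfS} together with Lemma \ref{lemma:lists}. Wait until some first output $m_0$ of $T$ appears, at some stage $s_0$, then continue dovetailing up to stage $s_0+n$, producing a list $L$ of discovered outputs; return $L(n)$ if $\length(L) > n$ and otherwise return $m_0$. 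Every halting computation $*T(i)$ is eventually captured (take $s$ exceeding its halting time), so every element of $\image T$ appears as an output of $F(T,\cdot)$; conversely $F(T,\cdot)$ never emits anything outside $\image T$. Hence $\image F(T,\cdot) = \image T$, and $F(T,\cdot)$ is total whenever $\image T$ is nonempty.

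The only real obstacle is the degenerate case $\image T = \emptyset$: the search for $m_0$ never terminates, so the literal conjunction of totality and image-preservation is impossible. I would resolve this either by (i) reading the lemma as implicitly restricted to $T$ with $\image T \neq \emptyset$, which is the only case used downstream, or (ii) fixing a harmless convention on $\{T : \image T = \emptyset\}$, e.g.\ having $Tot(T)$ be a Turing machine for the constant map $0$, accepting an image of $\{0\}$ in that degenerate case. Outside this point, computability of $F$ and hence of $Tot$ reduces to closure of morphisms in $\mathcal{S}$ under composition, computability of $U$, and list manipulation, all supplied by Proposition \ref{prp_propertiesOfS}.
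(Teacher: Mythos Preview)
The paper does not actually prove this lemma: it is introduced with ``The following is well known:'' and left without argument. Your dovetailing construction via $U$ together with part~\ref{axiom:split} of Proposition~\ref{prp_propertiesOfS} is exactly the standard proof one would expect, and is correct for $T$ with nonempty image.

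Your observation about the degenerate case $\image T = \emptyset$ is a genuine point: the lemma as literally stated cannot hold there, since a total map $\mathbb{N} \to \mathbb{N}$ has nonempty image. The paper simply does not address this. Looking downstream, $Tot$ is only ever applied (in \eqref{eq_Tot}) to $Gr_B(T)$, and the subsequent arguments tacitly assume the resulting $f^T$ is total with the right image; the empty case is never confronted. So your resolution~(i), reading the lemma as implicitly restricted to nonempty image, matches how the paper uses it. Resolution~(ii) would also work but introduces a spurious value that would need to be tracked through Lemma~\ref{lemma:convert}; (i) is cleaner here.
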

% TODO: {Add reference this}

Let 
\begin{align*}
& \pi _{\mathbb{N} }: \mathbb{N} \times
B \times \{\pm\} \to \mathbb{N}  \\
& \pi _{B \times \{\pm \} }: \mathbb{N} \times
B \times \{\pm\} \to B \times \{\pm \} 
\end{align*}
be the natural
projections.
We say that a subset $S \subset \mathbb{N} \times
B \times \{\pm\}$ is \emph{graphical}, if $\pi
_{\mathbb{N} } ^{-1} (n) $ is at most one element for each $n \in \mathbb{N} $.  

For a graphical  $S \subset \mathbb{N} \times B
\times \{\pm\}$ there is a natural total order, defined as
follows. For $s,s' \in S$, $s \leq s'$
if $\pi _{\mathbb{N} } (s) \leq \pi _{\mathbb{N} } (s')  $.
So a finite graphical $S \subset \mathbb{N} \times B
\times \{\pm\}$ 
determines a list $l _{S} \in L (B \times \{\pm
\})$ by
\begin{equation} \label{eq_l_s}
l _{S} (i) = \pi _{B \times \{\pm\}} (s
_{i}),
\end{equation}
where $s _{i}$ is the $i-1$ element of $S$ with
respect to the total order.

% \begin{notation} 
% \label{not_list} Given $f: \mathbb{N} \to 
% \mathbb{N} \times \mathcal{A} \times \{\pm \}$ and $n \in \mathbb{N} $, we will denote by $l _{f,n}$ the
% list $l _{S}$, where $$S =  \operatorname {image}
% (f| _{\{0, \ldots n\}}).$$
%
% \end{notation}

% $Tot (T)$ is constructed by a snake algorithm.

% \begin{definition}\label{def_}
% We say that a subset of $S \subset \mathbb{N} \times
% \mathcal{A} \times \{\pm\}$ is \textbf{\emph{graphical}}, if $\pi
% _{\mathbb{N} } ^{-1} (n) $ is at most one point for each $n
% \in \mathbb{N} $. We say that $S$ is \textbf{\emph{c.e.}}  if $S$ is the
% image of a computable (partial) map $\mathbb{N} \to
% \mathbb{N} \times \mathcal{A} \times \{\pm\}.$
% \end{definition}
%
% For a finite graphical subset $S \subset \mathbb{N} \times
% \mathcal{A}  \times \{\pm \} $. 
\begin{definition} \label{def:Tnstable}
Let $l \in L (B  \times \{\pm \} ) $. Define $b \in   
B$  to be \textbf{\emph{$l$-stable}}  if there is 
an $m \leq \length (l) $ s.t. $l (m) = (b,+)$ and there is no 
$m < k \leq  \length (l)$ s.t. $l (k) = (b,-)  $. 
% if $l:=LT' (\{0, \ldots, n\})$,
% is defined and $b$ is $l$-stable.
%
% kwhere $LT'$ is as 
% in Axiom \ref{axiom:utility}, 
\end{definition}
Given a map $f: \mathbb{N} \to B \times \{\pm \}$ 
let $\operatorname {gr} f: \mathbb{N} \to \mathbb{N} \times
B \times \{\pm \}$ be the map $\operatorname {gr}
f (n) = (n, f (n))$.
Expanding on Lemma \ref{lemma:TuringK}, there is a total
computable map:
\begin{equation} \label{eq_Gr}
Gr _{B}: \mathcal{T} \to \mathcal{T}, 
\end{equation}
with the property: $Gr _{B } (T)$ computes the map
$\operatorname {gr} (K _{B } (T)') $.

Set 
\begin{equation} \label{eq_Tot}
f ^{T} := e _{\mathbb{N} \times B \times \{\pm \}} ^{-1} \circ  Tot \circ
Gr _{B} (T) \circ e _{\mathbb{N}}.
\end{equation}
So that $f ^{T}: \mathbb{N} \to \mathbb{N} \times
B \times \{\pm \} $ and is total.

Finally, define $$G: B \times \mathcal{T} \times \mathbb{N} 
\to \{\pm \} $$ to be the map:
\begin{equation*}
   G (b, T, n)  = \begin{cases}
   +, &\text{$b$ is $l _{S}$-stable, for $S = \image (f ^{T}|
	 _{\{0, \ldots n\}})$}.  \\
      -, &\text{otherwise}. \\
\end{cases}
\end{equation*}
% TODO: {fix the issue here}
\begin{lemma} \label{lem_Gcomputable}
$G$ is computable.
\end{lemma}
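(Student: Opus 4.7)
The plan is to express $G$ as a composition of earlier established computable maps together with a bounded combinatorial check, avoiding any explicit construction of the sorted list $l_S$.

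First I would verify that the map $F : \mathcal{T} \times \mathbb{N} \to \mathbb{N} \times B \times \{\pm\}$, $F(T, i) := f^T(i)$, is total and computable. Unwinding (\ref{eq_Tot}), $F$ equals $e^{-1}_{\mathbb{N} \times B \times \{\pm\}} \circ U \circ (Gr_B \times \mathrm{id}_{\mathbb{N}})$, where $Gr_B$ is computable by (\ref{eq_Gr}), the universal evaluation $U$ is computable by part \ref{axiom:universal} of Proposition \ref{prp_propertiesOfS}, and the decoding $e^{-1}_{\mathbb{N} \times B \times \{\pm\}}$ is computable by Lemma \ref{lem_inverse}; totality of $F$ follows since $Tot$ produces a total Turing machine.

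The key step is to reformulate $l_S$-stability so that no sorting appears explicitly. Because $S = \image(f^T|_{\{0,\ldots,n\}}) \subset \mathbb{N} \times B \times \{\pm\}$ is graphical, the list $l_S$ in (\ref{eq_l_s}) simply enumerates $S$ in increasing order of the $\mathbb{N}$-coordinate. Consequently the position of a triple $(p, b, \pm)$ in $l_S$ is determined by $p$ alone, and Definition \ref{def:Tnstable} becomes equivalent to
\begin{equation*}
\exists\, i \leq n:\; F(T,i) = (p, b, +) \text{ for some } p \in \mathbb{N}, \text{ and } \forall\, j \leq n:\; F(T,j) = (q, b, -) \implies q \leq p.
\end{equation*}

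Finally I would assemble the Turing machine computing $G$. Each atomic test---the projections from the triple product (part \ref{axiom:product} of Proposition \ref{prp_propertiesOfS}), equality in $\{\pm\}$ and in $B$ (decidable since $e_B$ is injective with decidable image), and the comparison $q \leq p$ on $\mathbb{N}$---is computable; the bounded quantifiers over $i, j \leq n$ then reduce to a finite iteration, which can be organized using the product structure and the s-m-n construction of parts \ref{axiom:product} and \ref{axiom:split} of Proposition \ref{prp_propertiesOfS}. I expect the main obstacle to be the reformulation step: one must check that the sort built into (\ref{eq_l_s}) affects nothing beyond the order induced by the $\mathbb{N}$-coordinate, which is precisely what graphicality of $S$ guarantees; once that bridge is in place the remainder is a routine closure argument in $\mathcal{S}$.
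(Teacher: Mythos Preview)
Your approach is correct but differs from the paper's. The paper factors $G$ through the list machinery of part~\ref{axiom:utility}: it builds the map $g:\mathbb{N}\to L(\mathbb{N})$, $n\mapsto\{0,\ldots,n\}$, applies $LU$ and $L(e^{-1}_{\mathbb{N}\times B\times\{\pm\}})$ to obtain an element of $L^{graph}(\mathbb{N}\times B\times\{\pm\})$, then introduces an explicit computable sorting map $ord: L^{graph}(\mathbb{N}\times B\times\{\pm\})\to L(B\times\{\pm\})$ sending $l\mapsto l_{\image l}$, and finally checks $l$-stability on the sorted list. You instead bypass the list encoding and $ord$ entirely by observing that, since $S$ is graphical, $l_S$-stability is equivalent to the condition $\exists(p,b,+)\in S\ \forall(q,b,-)\in S:\ q\le p$, which you then verify by a bounded search over the raw values $f^T(0),\ldots,f^T(n)$. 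Your route is more elementary in that it uses only the product structure and bounded quantification, while the paper's route is more literal but requires the list axioms and the auxiliary $ord$ map (which the paper reuses later in Lemma~\ref{lemma:CM}).

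One small slip: in your displayed formula for $F$ you wrote $e^{-1}_{\mathbb{N}\times B\times\{\pm\}}\circ U\circ(Gr_B\times\mathrm{id}_{\mathbb{N}})$, but this omits $Tot$; the correct unwinding of \eqref{eq_Tot} is $e^{-1}_{\mathbb{N}\times B\times\{\pm\}}\circ U\circ\bigl((Tot\circ Gr_B)\times\mathrm{id}_{\mathbb{N}}\bigr)$. You clearly know this, since you invoke $Tot$ for totality in the very next clause, so this is a typo rather than a gap.
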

\begin{proof} [Proof]
This is clear, but we make this explicit. 
Let 
\begin{equation}
   \label{eq:eqg}
  g: \mathbb{N} \to  L (\mathbb{N} ) 
\end{equation}
be the map $g (n) = \{0, \ldots, n\}$, it is clearly 
computable directly by part \ref{axiom:utility} of the
Proposition \ref{prp_propertiesOfS}.
Let $$L ^{graph} (\mathbb{N} \times B \times \{\pm
\}) \subset L (\mathbb{N} \times B \times \{\pm
\})  $$
consist of lists $l$ s.t. $S= \image l$ is graphical. The
latter subset is given the induced encoding by restricting
$e _{L (\mathbb{N} \times B \times \{\pm
\})}$.

Then we can express $G$ as the composition of the sequence 
of maps:
\begin{align*}
   & B \times \mathcal{T} \times \mathbb{N} 
    \xrightarrow{id \times (Gr _{B}) \times g} 
    B \times \mathcal{T}  \times L(\mathbb{N}) 
      \xrightarrow{id \times L(U) } B \times L 
     (\mathbb {N})  \\ & \xrightarrow{id
		 \times L(e
		 _{\mathbb{N} \times B 
     \times \{\pm \} } ^{-1})}  B \times
		 L ^{graph} (\mathbb{N} \times B 
     \times \{\pm \} ) \xrightarrow{id \times ord} B \times
		 L (B 
     \times \{\pm \} ) \to \{\pm\},
\end{align*}
% with $Dom$ defined to be the set of $(b,n) \in B \times \mathbb{N}$ such that $pr _{B} \circ M (n) = b$ for $pr _{B}: B \times \{\pm\} \to B$ the projection. 
where 
\begin{enumerate}
\item $ord: L ^{graph} (\mathbb{N} \times B \times \{\pm
\})
\to L (B \times \{\pm \})$ is the map  
$ord(l) = l _{S}$, for $S= \image l$. 
\item The last map is:
\begin{equation*}
   (b,l) \mapsto \begin{cases}
      +, &\text{ if $b$ is $l$-stable} \\
      -, & \text{ otherwise}.
   \end{cases}
\end{equation*}
\end{enumerate}
The latter two maps are computable by explicit verification. 
In particular all the maps in the composition are computable and so $G$ is computable.

\end{proof}

Let 
\begin{equation}
   \label{eq:DB}
   Dec _{B}: \mathcal{T} \to \mathcal{T},
\end{equation}
be the computable total map corresponding $G$ via Axiom 
\ref{axiom:split}, so that $Dec _{B} (T) $ is the 
Turing machine computing 
\begin{equation*}
G ^{T}: B \times \mathbb{N} \to \{\pm 
\}, \quad G ^{T} (b,n) = G (b,T,n). 
\end{equation*}
The following is immediate from the construction: 
\begin{lemma}
   \label{lemma:DecB}
$Dec _{B} (T) $ has the properties:
\begin{enumerate}
	\item $\forall T \in \mathcal{T} \;  Dec _{B} (T) \in
	\mathcal{D} _{B}$.
	\item $Dec _{B} (T) $ is total.
	% \item If $T \in \mathcal{D} _{B}$, then $$\forall b \in
	% B \; (\text{$b$ is $T$ decided iff $b$ is $Dec _{B} (T)$
	% decided}  ).  $$
\end{enumerate}
\end{lemma}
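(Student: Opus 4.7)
The strategy is to unwind the explicit construction of $Dec_B$ in \eqref{eq:DB} and verify each property in turn. Property (1) is essentially by construction: Lemma \ref{lem_Gcomputable} gives that $G: B \times \mathcal{T} \times \mathbb{N} \to \{\pm\}$ is computable, and applying part \ref{axiom:split} of Proposition \ref{prp_propertiesOfS} to $G$ produces the computable map $Dec_B$ together with the guarantee that $Dec_B(T)$ computes the partial function $G^T: B \times \mathbb{N} \to \{\pm\}$. But fitting into such a commutative diagram is precisely the defining condition for membership in $\mathcal{D}_B$ per Definition \ref{def:calDB}, so $Dec_B(T) \in \mathcal{D}_B$.

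For property (2), I would show that the map $G^T$ is in fact total on $B \times \mathbb{N}$; then the Turing machine $Dec_B(T)$ computing it halts on every valid encoded input. Fix $T$ and $n$ and inspect $S = \image(f^T|_{\{0,\ldots,n\}})$ from \eqref{eq_Tot}. Lemma \ref{lem_total} yields that $Tot(Gr_B(T))$ is total with image equal to that of $Gr_B(T)$, which in turn is the graph $\{(k, K_B(T)'(k)) : K_B(T)'(k)\text{ defined}\}$. Every output of $Tot(Gr_B(T))$ therefore lies in $\image(e_{\mathbb{N} \times B \times \{\pm\}})$, so by Lemma \ref{lem_inverse} the partial inverse $e^{-1}_{\mathbb{N} \times B \times \{\pm\}}$ is defined on these outputs, and $f^T$ is total. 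Since these outputs all lie on a graph over $\mathbb{N}$, the first coordinate uniquely determines the triple, so $S$ is graphical; hence the list $l_S$ of \eqref{eq_l_s} is well-defined, and determining $l_S$-stability of a given $b \in B$ is a bounded finite check returning either $+$ or $-$. Thus $G(b, T, n)$ is always defined, so $G^T$ is total and $Dec_B(T)$ is total.

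The only subtlety requiring care is the chain of compatibilities: that $Tot$ preserves image (keeping outputs as valid encodings), that $\image Gr_B(T)$ is genuinely a graph over $\mathbb{N}$ so graphicality of $S$ comes for free, and that $l_S$-stability is decidable by a bounded search over the list. None of these is conceptually hard — each follows directly from Lemma \ref{lem_total}, Lemma \ref{lemma:TuringK}, and the construction of $Gr_B$ preceding \eqref{eq_Gr} — but they must be assembled in the correct order to conclude totality, so the main effort is bookkeeping rather than argumentation.
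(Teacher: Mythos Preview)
Your proposal is correct and follows the same approach as the paper, which simply states that the lemma is ``immediate from the construction'' without further detail. You have spelled out explicitly what the paper leaves implicit: that $Dec_B(T)$ computes $G^T$ (hence lies in $\mathcal{D}_B$), and that $G^T$ is total because $f^T$ is total (via $Tot$ and the fact that outputs of $Gr_B(T)$ encode elements of $\mathbb{N}\times B\times\{\pm\}$), $S$ is graphical, and $l_S$-stability is a bounded check.
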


% and finally if $pr _{B} M (n) \neq b$, for $pr _{B}: B \times \{\pm\} \to B$ the projection, then $(b,n)$ is rejected, that is $Dom$ is the set of $(b,n)$ such $pr _{B} M (n) = b$.  

% Likewise, given a decision machine $D: B \times \mathbb{N} \to \{\pm\}$, there is an associated
% machine $$M _{D} :  \mathbb{N} \to B \times \{\pm\}$$ defined by 
% $M _{D} (n)=  (b,+)$ if 
\begin{definition}\label{def:Ddecided}
For a map $D: B \times N \to \{\pm \} $, we say that $b \in B$ is \textbf{\emph{$D$-decided}}
if there is an $m$ s.t. $D (b,m) =+$ and for all 
$n \geq m$  $D (b,n) \neq -$.  Likewise, for $T 
\in \mathcal{D} 
_{B}$ we say that  $b \in B$ is \textbf{\emph{$T$-decided}} if 
it is $T'$-decided. Also for $T \in 
\mathcal{T} _{B }$ we say that $b$ is 
\textbf{\emph{$T$-stable}}  if it is $T'$-stable in the
sense of Definition \ref{def:Mstable}. 
 \end{definition}

\begin{lemma} \label{lemma:convert} Suppose that $T \in 
\mathcal{T} _{B}$  then $b$ is $T$-stable iff $b$ is $Dec _{B} (T)$-decided. 
% If $M$ is (stably) computable then $D _{M} $ is (stably) computable.
\end{lemma}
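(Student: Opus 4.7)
The plan is to prove both directions by tracking the finite list $l_{S_N}$ as $N$ grows, exploiting that $f^T$ is a total map whose image equals the graph of $T'$. Writing $P = \{m : T'(m)=(b,+)\}$ and $M = \{n : T'(n)=(b,-)\}$, and letting $\mathrm{enum}_N$ denote the set of first coordinates appearing in $S_N$, one checks directly from the definitions that $b$ is $l_{S_N}$-stable precisely when $P \cap \mathrm{enum}_N \neq \emptyset$ and $\max(P \cap \mathrm{enum}_N) \geq \max(M \cap \mathrm{enum}_N)$. The lemma then reduces to showing that this condition holds for all sufficiently large $N$ iff some $m \in P$ has no $k > m$ lying in $M$.

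For the forward direction, pick a $T$-stability witness $m \in P$ and choose $N^{*}$ so that $(m,(b,+)) \in S_{N^{*}}$; such an $N^{*}$ exists since $f^T$ enumerates $\operatorname{gr}(T')$ and $(m,(b,+))$ lies in it. For every $N \geq N^{*}$ we have $m \in P \cap \mathrm{enum}_N$, and moreover no $k > m$ lies in $M$ anywhere, hence none lies in $M \cap \mathrm{enum}_N$. Therefore $\max(P \cap \mathrm{enum}_N) \geq m \geq \max(M \cap \mathrm{enum}_N)$, so $b$ is $l_{S_N}$-stable and $G^{T}(b,N)=+$ for all such $N$, giving that $b$ is $Dec_B(T)$-decided.

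For the reverse direction I argue the contrapositive and split on the structure of $P$ and $M$. If $P = \emptyset$ then $b$ is never $l_{S_N}$-stable and $b$ fails to be $Dec_B(T)$-decided. If $P$ is finite with $m^{*} = \max P$ but $b$ is not $T$-stable, pick $k^{*} > m^{*}$ in $M$ and let $N^{*}$ be a stage by which $(k^{*},(b,-))$ has been enumerated by $f^T$; then for every $N \geq N^{*}$ one has $\max(M \cap \mathrm{enum}_N) \geq k^{*} > m^{*} = \max(P \cap \mathrm{enum}_N)$, so $b$ is not $l_{S_N}$-stable. When $P$ is infinite, the cofinality condition $(\forall m \in P)(\exists k \in M, k > m)$ forces $M$ infinite as well; the task is to show that $\max(M \cap \mathrm{enum}_N)$ strictly exceeds $\max(P \cap \mathrm{enum}_N)$ infinitely often, by iterating the observation that any currently maximal $m' \in P \cap \mathrm{enum}_N$ admits some $k > m'$ in $M$ whose eventual enumeration breaks the inequality, with the iteration continuing whenever a larger $P$-element is enumerated first.

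The main obstacle is exactly that infinite-infinite subcase: ruling out a pathological enumeration order for $f^T$ in which $\max(P \cap \mathrm{enum}_N)$ perpetually stays ahead of $\max(M \cap \mathrm{enum}_N)$. Handling this cleanly requires invoking the dovetailed structure of the totalization $Tot$ used in defining $f^T$, together with the fact that every pair in $\operatorname{gr}(T')$ appears in some $S_N$ at a finite stage, to force the required overturn. Once this combinatorial point is in place, the remaining bookkeeping that converts $T'$-indices into positions within $l_{S_N}$ is routine.
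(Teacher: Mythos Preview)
Your forward direction and the finite-$P$ subcase of the converse are correct and match the paper's argument closely; the paper's converse is essentially your finite-$P$ step (pick a stability witness $n_b$ at stage $N$, locate $n'>n_b$ in $M$, wait for $n'$ to be enumerated, and claim instability at that stage).

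You are right to isolate the infinite--infinite subcase as the crux, but your proposed resolution does not go through. The only properties of $Tot$ recorded in the paper are totality and image preservation; there is no ``dovetailed structure'' available to invoke. With only those two properties the desired overturn need not occur. Concretely, take $T'(2k)=(b,+)$ and $T'(2k+1)=(b,-)$, and arrange (by staggering the halting times of $T$, or simply by choosing $Tot$ adversarially) that $f^{T}$ enumerates first coordinates in the order $0,2,1,4,3,6,5,\ldots$. Then at every stage $N$ one has $\max(P\cap\mathrm{enum}_N)>\max(M\cap\mathrm{enum}_N)$, so $b$ is $l_{S_N}$-stable for all $N$ and hence $Dec_B(T)$-decided, while $b$ is plainly not $T$-stable. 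The paper's own converse argument has the same gap: its final sentence asserts $G(b,T,n)=-$ at the stage $n$ where $(n',b,-)$ first appears, but between stages $N$ and $n$ a fresh $(m'',b,+)$ with $m''>n'$ may already have been enumerated, in which case $b$ remains $l_{S_n}$-stable and no contradiction arises. So your caution is well placed, but the hand-wave cannot be completed without either an additional monotonicity hypothesis on $Tot$ or a modification of the definition of $G$.
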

\begin{proof} 
Suppose that $b$ is $T$-stable. In particular, there is an $m \in \mathbb{N} $ so 
that $T (m) = (b,+)$ and there is no $n>m$ so that $T (m)
= (b,-)$.  By construction, there exists an $m'$ s.t. $f
^{T} (m') = (m, b,+)$. It is then immediate that $b$ is $l
_{S _{n}}$-stable, for any $S _{n} = \image (f ^{T}| _{\{0,
\ldots, n\}})$ s.t. $n \geq m'$. 

Then by construction of $G$, $G (b, T, m') =+$ and there is no
$n>m'$ s.t. $G (b, T, m') =-$.
Thus $b$ is $G ^{T}$-decided and so $Dec _{B} (T) $-decided.

The converse is also clear. Suppose $b$ is $G ^{T}$ decided.  
Let $N$ be s.t. 
\begin{enumerate}
	\item 
$G (b, T, N) = +$. \label{item_+}
\item  There is no $n>
N$ s.t. $G (b,T,n) = -$. \label{item_-}
\end{enumerate}
By property \ref{item_+} there is an $n_0
\in \{0, \ldots, N\}$ s.t. $f ^{T} (n_0) = (n _{b}, b,
+)$, and there is no $n \in \{0, \ldots, N\}$, with $pr
_{\mathbb{N} } \circ  f ^{T} (n) > n _{b}$, s.t. $pr
_{B \times \{\pm \}} \circ  f ^{T} (n) = (b,-)$.

Now, if $b$ is not $T$-stable then there is a $n'> n _{b}$ s.t.
$T (n') = (b,-)$. By the previous paragraph, $$n' \notin
\image pr _{\mathbb{N} } \circ  f ^{T}| _{\{0, \ldots,
N\}}.$$ So there is an $n > N$
so that $f ^{T} (n) = (n', b,-)$. But then $G (b,T,n) = -$
which contradicts property \ref{item_-}.
\end{proof}
\begin{example}
   \label{example:dio2} By the Example \ref{remark:dio}
	 above there is a computable map $$P=Dec _{Pol} (A): Pol \times \mathbb{N} \to \{\pm\} $$ that stably soundly decides if a Diophantine polynomial has integer roots, meaning:
\begin{equation*}
\text{$p$ is $P$-decided} \iff \text{$p$ has no integer roots}.
\end{equation*}
\end{example}

% Given $A$ as above we may also construct a Turing decision machine:
% $$A: \mathbb{N} \times \mathcal{P} \to \{\pm\},$$

\begin{definition} \label{def:ThetaDT} Given a pair of maps 
$$M _{0}: {B} \times \mathbb{N} \to \{\pm\}  $$  $$M _{1}:  {B} \times \mathbb{N} \to 
\{\pm\},$$ we say that they are $ \textbf{\emph{stably
equivalent}}$ if
\begin{equation*} 
\text{$b$ is $M$-decided} \iff \text{$b$ is $M'$-decided}.
\end{equation*}
If $T \in \mathcal{D} _{B}$ then we say that $T$ 
\textbf{\emph{stable computes}} $M$ iff $T'$ is stably equivalent to $M$. 
% $M ^{s} =T ^{s} $ for $T ^{s} $ the stabilization of the machine $fog (T)$. Extend this definition naturally to machines of the form $$M: A \times \mathbb{N} \to B 
% $$  So that a Turing machine $T: A \times \mathbb{N} \to B \times \{\pm\}$ stably computes $M$ if for each $a \in A$, $T _{a} = T| _{\{a \times \mathbb{N}\}}  $ stably computes $M _{a} $.
\end{definition}
% \begin{definition} Given Turing machines $$T _{1}, T _{2}:  {B} \times \mathbb{N} \to \{\pm\},$$ we say that they are \textbf{\emph{stably equivalent}} and write $T _{1} \simeq _{s} T _{2}  $ if $T _{1}, T _{2}  $ stably compute the same encoded map.
% \end{definition}
\subsection {Decision maps in first order theories} Let 
$\mathcal{L}$ be as in the Introduction. 
% In what follows $M$ denotes a total encoded map $M: \mathbb{N} \to \\mathcal{L}mathcal{A} \times \{\pm\}$. 
Let $\mathcal{T}_{\mathcal{L}}$ be as in 
Definition \ref{def:T_B} with respect to 
$B=\mathcal{L} $. 
The following is a version for stably c.e. theories of the classical fact, going back to  G\"odel, that for a 
theory with a c.e. set of axioms we may computably enumerate its theorems. Moreover, the procedure to obtain the corresponding
Turing machine is computably constructive. 
\begin{notation} \label{eq:notationTs}
Note that each $T \in \mathcal{T} _{\mathcal{L}}$,  
determines the set 
\begin{equation*}  
(T') ^{s} \subset \mathcal{L},
\end{equation*}
called the stabilization of $T'$, we hereby 
abbreviate the notation for this set as $T ^{s}$. 
\end{notation}

\begin{lemma} \label{lemma:CM}
Let $\mathcal{L} $ be given. There is a computable total map:
\begin{equation*}
C: \mathcal{T} \to \mathcal{T}
\end{equation*}
so that $\forall T \in \mathcal{T}: C (T) \in \mathcal{T}
_{\mathcal{L}} $.
If in addition $T \in \mathcal{T} _{\mathcal{L}} $ 
then $(C (T)) ^{s} $  is the deductive closure of $T ^{s}$.
% $CM ^{s} (\mathbb{N})$ is the deductive closure of
% the set $M ^{s} (\mathbb{N})$, and so that if $M$
% is (stably) computable then so is $CM$.
% Furthermore, if $M$ is stably computed by $T$ then
% the Turing machine $CT$ stably computing $CM$ is
% constructible from the specification $T$. 
% Specifically,  
% Then there is an (abstract) Turing
% machine  
% \begin{equation*}
%    C: \mathcal{T} \to \mathcal{T},
% \end{equation*}
% so that if $T = i (T') $  for $T'$  a
%    Turing machine $\mathbb{N} \to \mathcal{A}
%    \times \{\pm\}$ then $C (T) $  
% 
% $T \mapsto CT$ is a total computable map.
\end{lemma}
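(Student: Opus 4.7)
The plan is to construct $C$ by a two-stage process. First, apply $K := K_{(\mathcal{L}, e_{\mathcal{L}})}$ from Lemma \ref{lemma:TuringK} to normalize $T$: since $K(T) \in \mathcal{T}_{\mathcal{L}}$ for every $T$, this alone guarantees $C(T) \in \mathcal{T}_{\mathcal{L}}$. When $T \in \mathcal{T}_{\mathcal{L}}$, Lemma \ref{lemma:TuringK} gives $(K(T))^{s} = T^{s}$, so the remaining task is to stably enumerate $\overline{T^{s}}$ given the Turing machine $K(T)$.

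The key is to express $\alpha \in \overline{T^{s}}$ in a workable $\Sigma_{2}$ form using the enumeration map $\Phi$ from Proposition \ref{prp_propertiesOfS}, part \ref{axiom_L}. Let a witness $w$ encode a tuple $(l, k, m_{0}, \dots, m_{\length(l)})$ with $l \in L(\mathcal{L})$, $k \in \mathbb{N}$, and $m_{i} \in \mathbb{N}$, and define $Q(T, \alpha, w, v)$ to be the decidable predicate asserting that, after simulating $T$ for $v$ steps on each input $n \leq v$ via the universal machine $U$, one has $\Phi(l, k) = \alpha$, every observed $T'(m_{i})$ equals $(l(i), +)$, and no observed $T'(n)$ with $n \in (m_{i}, v]$ equals $(l(i), -)$. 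Each clause is computable by the machinery of Proposition \ref{prp_propertiesOfS}, and $\alpha \in \overline{T^{s}}$ iff $\exists w\, \forall v\, Q(T, \alpha, w, v)$, since a globally good $w$ encodes a finite $F_{l} \subset T^{s}$ together with a proof index $k$ witnessing $F_{l} \vdash \alpha$.

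To convert this $\Sigma_{2}$ characterization into a stable enumeration, apply a standard priority-witness trick. Define $w^{*}(T, \alpha, j) := \min\{w \leq j : Q(T, \alpha, w, v) \text{ holds for all } v \leq j\}$, undefined if no such $w$ exists. Then the total computable map $G: \mathcal{T} \times \mathbb{N} \to \mathcal{L} \times \{\pm\}$ decodes the input natural as a pair $(\alpha, j)$ (using a computable bijection $\mathbb{N} \to \mathbb{N} \times \mathbb{N}$ from part \ref{axiom:product} and $e_{\mathcal{L}}^{-1}$ from Lemma \ref{lem_inverse}) and emits $(\alpha, +)$ if $w^{*}(T, \alpha, j)$ is defined and equal to $w^{*}(T, \alpha, j-1)$, else $(\alpha, -)$. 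By part \ref{axiom:split} there is a computable $s: \mathcal{T} \to \mathcal{T}$ with $s(T)$ computing $n \mapsto G(T, n)$, and one sets $C(T) := s(K(T))$. For $\alpha \in \overline{T^{s}}$ the minimal good witness $w_{0}$ forces $w^{*}(T, \alpha, j) = w_{0}$ for all sufficiently large $j$ (each $w < w_{0}$ has a finite refutation time), producing only $(\alpha, +)$ emissions on the $\alpha$-subsequence; for $\alpha \notin \overline{T^{s}}$ every $w$ has finite refutation time, so $w^{*}$ either becomes undefined or strictly increases infinitely often, producing infinitely many $(\alpha, -)$ emissions.

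The main obstacle is that the naive scheme of emitting $(\alpha, +)$ whenever $\alpha \in \overline{S_{n}}$ (the closure of the stable-at-time-$n$ axioms) fails: an oscillating $T$ whose $S_{n}$ alternates between distinct axiom systems each proving $\alpha$, yet with no sentence eventually stabilizing in $T^{s}$, would wrongly enumerate $\alpha$ into $(C(T))^{s}$. The priority-witness construction resolves this by committing to a least witness and re-emitting $-$ precisely when that witness is refuted, which is what makes the $\Sigma_{2}$-to-stably-c.e. conversion effective in this abstract categorical setting.
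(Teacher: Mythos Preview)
Your approach is genuinely different from the paper's. The paper builds the output list $U^{T}$ by a direct recursion: at stage $n+1$ it ranges over index-lists $l$, forms the finite list of sentences $T'$ emits at those indices, and appends $(\Phi(\cdot,m),\zeta)$ where $\zeta$ records whether every such sentence currently looks stable in the partial run $H(T,\{0,\ldots,n+1\})$. You instead first isolate a $\Sigma_{2}$ description of $\overline{T^{s}}$ and then run the standard least-witness-in-the-limit conversion from $\Sigma_{2}$ to stably c.e. This is more modular, and your closing paragraph correctly pinpoints the oscillation hazard any such construction must confront; your priority scheme handles it by committing to a single witness rather than letting all finite sublists vote simultaneously, which is exactly the delicate point in the paper's version.

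One gap does need patching. As you have written $Q(T,\alpha,w,v)$, the clause ``every observed $T'(m_{i})$ equals $(l(i),+)$'' is vacuous whenever $T$ simply fails to halt on input $m_{i}$. Thus a witness $w=(l,k,m_{0},\ldots)$ with $\Phi(l,k)=\alpha$, with $T'(m_{i})$ undefined, and with $T'$ never emitting $(l(i),-)$ at any index, will satisfy $\forall v\,Q$ even though $l(i)\notin T^{s}$; so $(C(T))^{s}$ can strictly overshoot $\overline{T^{s}}$. The standard fix is to enlarge the witness to carry halt-time bounds $s_{i}$ and to require in $Q$ (independently of $v$) that $T$ on input $m_{i}$ halts within $s_{i}$ steps with output $e_{\mathcal{L}\times\{\pm\}}(l(i),+)$; this is $\Delta_{0}$ in $w$, keeps $Q$ monotone decreasing in $v$, and makes the equivalence $\alpha\in\overline{T^{s}}\iff\exists w\,\forall v\,Q$ honest. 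Alternatively, precompose with the totalization $f^{T}$ of \eqref{eq_Tot} so that the underlying map is total and the vacuity cannot arise. With either repair your argument goes through.
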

 \begin{proof}
% Let $\mathcal{F}$ denote the set of finite subsets of $\mathcal{A}$, and let $\mathcal{T} _{0} $ be the set of Turing machines $\mathcal{T}: \mathbb{N} \to \mathcal{A}$,  then 
% First we need some setup. Let $L (\mathcal{A} ) $  denote the set of finite ordered lists of elements of $\mathcal{A} $. 
% More specifically, $$L (\mathcal{A} ) = \bigcup _{n 
% \in \mathbb{N}} Maps (\{0, 
% \ldots, n\}, \mathcal{A}). $$ 
% \textcolor{blue}{should be partial} 
% This is supposed to be encoded so that $$P: L 
% (\mathcal{A}) \times \mathbb{N}  \to 
% \mathcal{A},$$ is computable, where $P (p,i): = p 
% (i) $.
% \begin{cases}
% \epsilon, \mbox { if $p (i) $ 
% is undefined} \\
% p (i), \text{ otherwise}.
% \end{cases}
% $$ 
Let $L (\mathcal{L} ) $ be the list construction on
$\mathcal{L} $ as previously. Let $\Phi: L (\mathcal{L}
) \times \mathbb{N} \to \mathcal{L}$ be as in the Axiom
\ref{axiom_L}.

Using the map $ord$ from Lemma \ref{lem_Gcomputable}, define a map 
\begin{equation*}
   \zeta: L (\mathcal{L} ) \times L ^{graph} (\mathbb{N} \times \mathcal{L} \times \{\pm \})  \to \{\pm\} 
\end{equation*}
by 
\begin{equation*}
   \zeta (l,l')  = \begin{cases}
        +, &\text{if for each $0 \leq i \leq \length (l)$, 
        $l (i)$ is $ord (l')$-stable.} \\
      -, &\text{otherwise}. \\
\end{cases}
\end{equation*}
Utilizing Proposition \ref{prp_propertiesOfS} we readily see that $\zeta$ is computable.

Now define $H$ to be the composition of the 
sequence of maps:
% \begin{equation*}
%  \mathcal{T} \times \mathbb{N} 
%     \xrightarrow{(Gr _{\mathcal{L} })} 
%      \mathcal{T}  
%       \xrightarrow{id \times L(U) }  L 
%      (\mathbb {N})  \\  \xrightarrow{id
% 		 \times L(e
% 		 _{\mathbb{N} \times \mathcal{L} 
%      \times \{\pm \} } ^{-1})}  \mathcal{L} \times
% 		 L ^{graph} (\mathbb{N} \times \mathcal{L} 
%      \times \{\pm \} )
% \end{equation*}

\begin{equation*}
    \mathcal{T} \times  L (\mathbb{N} ) 
    \xrightarrow{Gr _{\mathcal{L} } \times id} \mathcal{T} \times L 
    (\mathbb {N}) \xrightarrow{LU} L (\mathbb {N})  
    \xrightarrow{L (e _{\mathbb{N}  \times \mathcal{L} \times \{\pm 
    \} } ^{-1}) } L ^{graph} (\mathbb{N} \times\mathcal{L} \times \{\pm \}).
\end{equation*}
All the maps in the composition are computable directly by
the Proposition \ref{prp_propertiesOfS} and Lemma
\ref{lem_inverse} and so $H$ is computable.

We may now construct our map $C$. In what follows $\cup$ will be the natural list union operation as previously in \eqref{eq:listunion}.
Set $$L _{n} (\mathbb{N} ):= \{l \in L (\mathbb{N} 
) | \max l \leq n, \mbox { $\max l$ the maximum of $l$ as a 
map} \}.$$
Let $$pr _{\mathcal{L} }: \mathbb{N}  \times \mathcal{L} \times \{\pm \} \to 
\mathcal{L} $$ be the natural projection.
For $n \in \mathbb{N} $, define $U ^{T} _{n} \in
L (\mathcal{L} \times \{\pm \} ) $ recursively 
by $U ^{T}_{0} := \emptyset$, 
$$U ^{T}_{n+1}:= U ^{T} _{n} \cup \bigcup
_{l \in L _{n+1} (\mathbb{N})} \bigcup _{0 \leq m \leq
n+1}(\Phi (L _{pr _{\mathcal{L}}} \circ H(T, 
l),  m), \zeta (L _{pr _{\mathcal{L}}} \circ  H (T,l), H (T, \{0, \ldots, n+1\})  ).$$
%       if $pr _{\pm} (M (n)) = + $, and 
% $L _{n+1} = L'_{n+1} $ otherwise.
% For each $\alpha \in L _{n} $, an ordered finite list of
% elements of $\mathcal{L} \times \{\pm\}$, if $\alpha$ is not $n+1$-stable then add instruction $(\alpha, -)$ to $L$
% then 
%       If $CM' (n)$ $n$-stable then set $$L _{n+1}:= L _{n}
%       \cup M (n),$$ an ordered finite list of elements of
%       $\mathcal{L} \times \{\pm\}$,
% where $\cup$ is the operation of union of subsets of
% $\mathcal{L} \times \{\pm\}$. 
% Set $S _{n+1} := S _{n} \cup $ a finite possibly empty
% list of elements of $\mathcal{L}$, and set $M _{n+1}: = M _{n}$. 
%       Otherwise, if $pr _{\pm} \circ M (n)= - $ then for
%       each sentence $\phi$ in $L _{n} $  which is deduced
%       from $\phi _{n} = pr _{\mathcal{L}} \circ M (n)$, add to $L _{n}$ the instruction $(\phi, -)$. We set $L _{n+1} $ to be the resulting ordered list of instructions.
% Set $S _{n+1}$ to be the set of $L _{n+1} $-stable elements of $L _{n+1} $.
% And set $$M _{n+1}:= m \mapsto M _{n}(m+n), $$ in other words the left hand side is map defined by the anonymous map $m \mapsto M _{n}(m+1)$, that is we shift by $n$.
% \item $n:=n+1$. Go to Start.

As in Example \ref{remark:dio} we define $$U ^{T}: 
\mathbb{N} \to \mathcal{L} \times  \{\pm \}, \quad U ^{T} (n) :=
U ^{T} _{n+1} (n).$$ 
And this induces a total map $$U: \mathcal{T} 
\times \mathbb{N} \to \mathcal{L} \times  \{\pm 
\}, 
$$ $U (T,n):=U ^{T} (n)  $.
$U$ is computable by explicit verification, 
utilizing Proposition \ref{prp_propertiesOfS}, and the 
recursive program for $\{U ^{T} _{n}\}$.  
Hence, by part \ref{axiom:split} of Proposition
\ref{prp_propertiesOfS}, there is an 
induced by $U$ computable total map:
\begin{equation*}
   C: \mathcal{T} \to \mathcal{T}, 
\end{equation*}
s.t. for each $T \in \mathcal{T} $, $C (T)  $ 
computes $U ^{T}$. 

By construction, this has the needed properties, and we are
done.
% If $T \in \mathcal{T} 
% _{\mathcal{L} }$   and is total then $(U ^{T}) ^{s} $ is 
% by construction the deductive closure of $(K 
% _{\mathcal{L} } (T)) ^{s} = T ^{s} $. So the map $C$ has the 
% needed property, and we are done.
\end{proof}
\begin{notation}
\label{not_vdashalpha} Given $M: \mathbb{N} \to \mathcal{L}
\times \{\pm \}$, and some translation $i: \mathcal{A} \to
\mathcal{L} $, we use from now the shorthand: $M \vdash
^{i} \alpha$, for $M ^{s}
 \vdash ^{i} \alpha$, whenever $\alpha \in
\mathcal{A} $, and whenever this may cause no confusion.
\end{notation}
% TODO: {clarify notation}

Let $\mathcal{F} _{0} $, as in the introduction, denote
the set of formulas $\phi$ of arithmetic with
one free variable so that for any term $n$, $\phi (n)$ is
$\Sigma ^{0} _{0}$ and in particular is $Q$-decidable.  
\begin{definition} \label{def:speculative}
Let $\mathcal{L} $ be a first order language, with some
translation $i: \mathcal{A} \to \mathcal{L} $. 
We say that $M: \mathbb{N} \to \mathcal{L} \times \{\mathbb{\pm}\} $ is \textbf{\emph{speculative}} (with respect
to $i$) if the
following holds. 
Let $\phi \in \mathcal{F} _{0} $, and  
set 
\begin{equation} \label{eq:alphaphi}
  \alpha _{\phi}  = \forall m \; \phi (m),
\end{equation}
then $$(\forall m \; Q \vdash \phi (m))  \implies M 
\vdash ^{i} \alpha _{\phi} .$$ 
% Then whenever $$\exists n _{0}:  M (n _{0} ) = \phi (m _{0})$$ then either the sentence $\neg \alpha $ is $M$-stable or for some $n>n _{0} $ $M (n) = \alpha$.
\end{definition}

% We may informally interpret this condition as
% saying that $M$ initially outputs $\alpha$ as a
% hypothesis, and removes $\alpha$ from its list
% (that is $\alpha$ will not be in $M ^{s} $) only
% since $\neg \phi$ (in the standard structure of arithmetic
% $\mathbb{N}$) only if $\exists m: Q \vdash \neg \phi (m)$ by the assumption that each $\phi (m)$ has complexity $\Sigma ^{0} _{1}$.
% , since $M$ can be sure that if $\neg \alpha$ then $\alpha$ will eventually be removed from its list (will not be in $M ^{s} $). 
% \footnote {Again, this is only an interpretation of what $M$ is trying to do, a priori there is no mathematical content in the previous sentence.} 
Note that we previously constructed an Example 
\ref{remark:dio} of a Turing machine, with an 
analogue of this speculative property. Moreover, we have the following crucial result, which to paraphrase 
states that there is an operation $Spec$  that 
converts a stably c.e. theory to
a speculative stably c.e. theory, at
a certain loss of consistency.
\begin{theorem} \label{lemma:speculative} 
Let $\mathcal{L} $ be given and $i: \mathcal{A} \to
\mathcal{L} $ be some fixed 2-translation. Then there is
a computable total map $Spec _{i}: \mathcal{T} \to \mathcal{T} $, with the following properties:   
\begin{enumerate}
\item  $\image Spec _{i} \subset \mathcal{T} _{\mathcal{L} } $.
\label{prop:0}
   \item  \label{prop:1}  Suppose that $T \in \mathcal{T} 
   _{\mathcal{L} }$ and $T ^{s} \vdash ^{i} Q$, set $T _{spec}=Spec _{i} (T) $ then
$T' _{spec}$ is speculative, moreover if $T'$ is 
total then so is $T' _{spec}$. 
   \item \label{prop:2}  Using Notation 
   \ref{eq:notationTs}, if $T \in \mathcal{T} 
   _{\mathcal{L}}$ then $T _{spec} ^{s} 
    \supset T  ^{s} $
      \item  \label{prop:3} If $T \in \mathcal{T} 
   _{\mathcal{L}}$ and $T ^{s}$ is 
      $1$-consistent with respect to $i$ then $T_{spec} ^{s} $ is consistent.
\end{enumerate}
\end{theorem}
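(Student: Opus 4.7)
The plan is to realize $Spec_i$ via the $s$-$m$-$n$ theorem of Proposition \ref{prp_propertiesOfS}(\ref{axiom:split}): construct a single computable map $V: \mathcal{T} \times \mathbb{N} \to \mathcal{L} \times \{\pm\}$ and let $Spec_i(T)$ be the induced Turing machine computing $V(T,\cdot)$. Fix a computable enumeration $\phi_0,\phi_1,\ldots$ of $\mathcal{F}_0$. For each stage $k$, form a finite stage-$k$ speculated set $S_k \subset \mathcal{L}$ as the union of (i) the current positive $T$-assertions within $k$ simulation steps --- those $\alpha$ such that some $T(m) = (\alpha,+)$ for $m \le k$ and no later $T(m') = (\alpha,-)$ for $m < m' \le k$ --- and (ii) the speculative additions $i(\alpha_{\phi_j})$ for $j \le k$ such that no $Q$-proof of $\neg\phi_j(m)$ for $m \le k$ has been found within $k$ search steps. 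On input $n = \langle k,j \rangle$, $V$ emits $(\alpha,+)$ where $\alpha$ is the $j$-th entry of $S_k$, or $(\alpha,-)$ where $\alpha$ is the $j$-th entry of $S_{k-1}\setminus S_k$ if such an index exists, and a fixed trivially-provable dummy pair otherwise. Computability of $V$ follows from Proposition \ref{prp_propertiesOfS}: the map $\phi \mapsto i(\alpha_\phi)$ is computable because $\alpha_\phi$ is $\Pi^0_1$, effectively rewritable as $\Sigma^0_2$ with a dummy existential, and $i$ is a 2-translation; the $Q$-proof search is standard; and the list assembly of $S_k$ uses the utility axiom \ref{axiom:utility}.

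Properties \ref{prop:0}, \ref{prop:1} and \ref{prop:2} are then verified by bookkeeping on $(S_k)$. For \ref{prop:2}, any $\alpha \in T^s$ eventually becomes a permanent member of the current positive $T$-assertions, so $\alpha \in S_k$ for all large $k$; thus $(\alpha,+)$ occurs cofinally in $V(T,\cdot)$ while the transitions $\alpha \in S_{k-1}\setminus S_k$ that emit $(\alpha,-)$ happen only finitely often. For \ref{prop:1}, if $\forall m\, Q \vdash \phi(m)$ then consistency of $Q$ precludes any $Q$-proof of $\neg\phi(m)$, so $i(\alpha_\phi)$ enters $S_k$ at the moment $\phi$ is enumerated and remains there forever, yielding $T_{spec} \vdash^i \alpha_\phi$. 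Totality in \ref{prop:1} is secured by the dummy clause of $V$.

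Property \ref{prop:3} is the main substance. Assume $T^s$ is 1-consistent relative to $i$; Definition \ref{def:1-consistent} then gives $T^s \vdash^i Q$. Set $A = \{\phi \in \mathcal{F}_0 : \forall m\, Q \nvdash \neg\phi(m)\}$; by the stability characterisation above, $T_{spec}^s = T^s \cup \{i(\alpha_\phi) : \phi \in A\}$ set-theoretically. If this theory were inconsistent, by compactness some $\phi_1,\ldots,\phi_k \in A$ would satisfy $T^s \vdash^i \exists m\, \psi(m)$ with $\psi(m) = \neg\phi_1(m) \lor \cdots \lor \neg\phi_k(m) \in \mathcal{F}_0$. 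Applying 1-consistency gives an $m$ with $T^s \nvdash^i \neg\psi(m) = \phi_1(m) \land \cdots \land \phi_k(m)$. But $\phi_j \in A$ combined with $Q$-decidability of $\mathcal{F}_0$-instances forces $Q \vdash \phi_j(m)$ for every $j$, so $T^s \vdash^i Q \vdash \bigwedge_j \phi_j(m)$, the required contradiction.

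The main obstacle is the bookkeeping demanded by property \ref{prop:2}: without the $S_k$-snapshot design, $T$'s persistent positive assertions could be overwritten by late spec retractions $(i(\alpha_\phi),-)$. The snapshot trick resolves this by cofinally re-emitting each $\alpha \in S_k$ as $(\alpha,+)$, while each $\alpha$ can exit $S_k$ only finitely often, bounded by the flips of $T$'s guess for $\alpha$ plus at most one transition if $\alpha = i(\alpha_\phi)$ and a $Q$-counterexample eventually surfaces. A secondary technicality is that $\alpha_\phi$ is literally $\Pi^0_1$, so the inclusion $\Pi^0_1 \hookrightarrow \Sigma^0_2$ via a dummy quantifier needed to invoke the 2-translation hypothesis is effective and should be noted explicitly in the full write-up.
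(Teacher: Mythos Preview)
Your proposal is correct and follows essentially the same route as the paper: interleave $T$'s output with a stably-enumerated stream of speculative guesses $i(\alpha_\phi)$, apply the s-m-n axiom (Proposition \ref{prp_propertiesOfS}(\ref{axiom:split})) to obtain $Spec_i$, and establish property \ref{prop:3} via the identical compactness-plus-1-consistency argument (the paper's Lemma \ref{lemma:Sconsistent} is exactly your final paragraph). The one mechanical difference is that the paper uses a plain odd/even interleave of $T'$ with a separately built stable enumeration $J$ of $G=\{\phi:\forall m\,Q\vdash\phi(m)\}$ rather than your snapshot/re-emission scheme; your design is in fact the more careful of the two with respect to the property-\ref{prop:2} edge case you flag.
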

% Thus, the speculative condition is not as forcing as it may sound in the context of stable consistency.
\begin{proof} 
     $\mathcal{F} _{0}$ is assumed 
     to be encoded so that the map $$ev: 
     \mathcal{F} _{0} \times \mathbb{N}   \to 
     \mathcal{A}, \quad (\phi, m) 
     \mapsto \phi (m)$$ is computable. 
Let $G \subset \mathcal{F} _{0}$ be the subset of formulas
$\phi$ s.t. $\forall n \; Q \vdash \phi (n ^{\circ})$, where
$n ^{\circ }$ denotes the corresponding numeral. 
% Then this is
% a recursive subset. Let $F: \mathbb{N} \to \mathcal{F}
% _{0}$ be any computable map with image $G$.

%
We then need:
\begin{lemma} \label{lemma:F} There is a total
   computable map $J: \mathbb{N} \to \mathcal{F} _{0}  \times 
   \{\pm\}$  with the property: $$J ^{s} = G. $$ 
   % and such that if $M$ is (stably) computable then so is $F$.
\end{lemma}
\begin{proof}
The construction is analogous to the construction in the Example \ref{remark:dio} above.  % Let $CM$ be as in Lemma \ref{lemma:CM}, for brevity set $$Q := CM.$$  Let $$D _{Q}:\mathcal{A} \times \mathbb{N} \to \{\pm\}$$  be the decision map as in Lemma \ref{lemma:convert}.
Fix any total, bijective, Turing machine $$Z:
   \mathbb{N} \to \mathcal{F} _{0}.$$
For a $\phi \in \mathcal{F} _{0} $ we will say that it is $n$-\textbf{\emph{decided}} if 
\begin{equation*}
   (\forall m \in \{0, \ldots, n\}) \; Q \vdash \phi (m).
\end{equation*}
% $D _{Q} (\alpha, m) = + $  for all $m$ satisfying $k \leq m \leq n $. 
In what follows each $J _{n} $ has the type of 
ordered finite list of elements of $\mathcal{F} 
_{0}\times \{\pm\}$, and $\cup$ will be the 
natural list union operation, as previously.
Define $\{J _{n}\} _{n \in \mathbb{N} }$ recursively by $J _{0} 
:=\emptyset$,
% $n:=0$.
% \item Start. For each $\phi \in \{Z (0), \ldots, Z (n)\}$ if it is $n$-decided then add $(\phi,+)$ to the list $F _{n}$, otherwise add $(\phi,-)$ to $F _{n} $.
% Jor each $\zeta \in L _{n}$, set $\phi _{\zeta} = pr
% _{\mathcal{F}} (\zeta)  $, for $pr _{\mathcal{J}} $
  %  If 
  % $(\phi _{\zeta}, n) \notin D$ do nothing, if $D _{Q} (\phi _{\zeta},n) = -$ add $(\phi,-)$ to the list $L _{n}$. Call the resulting list $L _{n+1} $. Explicitly,
\begin{align*}
   {J} _{n+1}& := J _{n}  \cup  \bigcup _{\phi \in \{Z ({0}), \ldots, Z (n) \}}  (\phi, d ^{n} (\phi)),
    \\
    & \mbox{ where  $d ^{n} (\phi) = +$ if $\phi$ is $n$-decided and $d ^{n} (\phi) = - $ otherwise}.
 \end{align*}
      % And set $L _{n+1}:= \widetilde{L} _{n+1}  \cup (Z (n), D _{Q}  (Z (n), n) $.
% otherwise if $D _{Q} (\phi _{\zeta} ,n) = +$
% add $(\phi \,+)$ to $L _{n} $, if
% \item Set $L:=L ^{n+1} $, an ordered list with order starting at $0$.  
% Note that$$\forall n \in \mathbb{N}: A _{n+1}| 
% _{\domain A _{n}} = A _{n},  \text{ and }  \length (A 
% _{n+1})  > \length (A _{n}) $$
We set $J (n) := J _{n+1} (n) $. This is a total 
map $$J: \mathbb{N} \to \mathcal{F} _{0} 
\times \{\pm \}, $$ having the property $F 
(\mathbb{N} ) = \cup _{n} \image (J _{n}) $.  $J$ is computable by explicit verification, using Proposition \ref{prp_propertiesOfS}.
% Now if $M$ is computable then $D _{Q} $ is computable by Lemma \ref{lemma:convert}, and so $F$ is computable.
\end{proof}
Returning to the proof of the theorem. Let $K=K _{\mathcal{L}}: \mathcal{T}
\to \mathcal{T} $ be as in Lemma 
\ref{lemma:TuringK}.  For $\phi \in \mathcal{F} 
_{0}$ let $\alpha _{\phi}$   be as in \eqref{eq:alphaphi}.
 Define: $H: \mathcal{T} \times \mathbb{N} \to 
 \mathcal{L} \times \{\pm \} $ by
\begin{equation*}
H (T,n): = \begin{cases} (K (T) )' 
(n), &\text{ if $n=2k+1$} \\
   (i  (\alpha_{pr _{\mathcal{F} _{0}} \circ J (n)}), pr
	 _{\pm} ({J (n)})), &  \text{ if $n=2k$},
\end{cases}  
\end{equation*}
   where $pr _{\mathcal{F} _{0}}: \mathcal{F} _{0} \times
   \{\pm\} \to \mathcal{F}$, and $pr _{\pm}:
   \mathcal{F} _{0} \times \{\pm\} \to \{\pm\}$ are the
   natural projections. 
% We may express $H$ as the composition of the 
% sequence of maps:
% \begin{equation*}
%   \mathcal{T} \times \mathbb{N} \to \mathcal{T} 
%   \times \mathbb{N}  
% \end{equation*}
$H$ is computable by Proposition \ref{prp_propertiesOfS}. (Factor $H$ as a composition of computable maps as previously.) 

Let $Spec _{i}: \mathcal{T} 
\to \mathcal{T} $ be the computable map 
corresponding to $H$ via Axiom \ref{axiom:split}.
In particular, for each $T \in \mathcal{T} $, 
$Spec _{i} (T) $ computes the map $$T' _{spec}:= H 
^{T}: \mathbb{N} \to \mathcal{L} \times \{\pm \}, 
\quad H ^{T} (n) = H (T,n), $$  which by construction is
speculative. Now, $Spec _{i} (T) $ satisfies the Properties \ref {prop:0}, \ref{prop:1}, \ref{prop:2}
immediately by construction. 

It only remains to check Property
\ref{prop:3}.
In what follows, let $T \in \mathcal{T} _{\mathcal{L} }$,
and set $F _{spec}
= (T'  _{spec}) ^{s}$, and $F= (T') ^{s}$, which is
1-consistent by assumption.
\begin{lemma}
   \label{lemma:Sconsistent}
$F _{spec}$ consistent unless for some $\phi \in G$ 
$$F  \vdash ^{i} \neg \forall m \; \phi (m).$$ 
\end{lemma}
\begin{proof} Suppose that $F _{spec}$ is inconsistent so that:
   $$F \cup \{i(\alpha _{\phi _{1}}), \ldots, i(\alpha 
   _{\phi _{n}})\} \vdash \alpha \land \neg \alpha$$ 
   for some $\alpha \in \mathcal {L}$, and some 
   $\phi _{1}, \ldots, \phi _{n} \in G$. Hence, 
   $$F \vdash ^{i} \neg (\alpha _{\phi _{1}} 
   \land \ldots \land \alpha 
   _{\phi _{n}}), \; \text{as $i$ preserves logical
	 operators.} $$  
   But $$\alpha _{\phi _{1}} 
   \land \ldots \land \alpha 
   _{\phi _{n}} \iff \forall m \; \phi (m), $$ where 
   $\phi$ is the formula with one free variable: 
   $\phi (m) :=   \phi _{1} (m)   \land 
   \ldots \land \phi _{n} (m)$.
   Clearly $\phi \in G$, since $\phi 
   _{i} \in G$, $i=1, \ldots, n$. Hence, the 
   conclusion follows.
\end{proof}
Suppose that $F _{spec}$ inconsistent, then by the lemma 
above for some $\phi \in G$: $$F \vdash ^{i} \exists m \; \neg \phi (m).$$ 
By assumption $F$ is
$1$-consistent with respect to $i$ and so: $$\exists m \;
F  \nvdash ^{i} \phi (m).$$ But $\phi$ is in 
$G$, and $F \vdash ^{i} Q $ (part of our 1-consistency
assumption) 
so that $\forall m \; F \vdash ^{i} \phi (m) $ and so
$$\exists m \; F \vdash \neg \phi (m) \land \phi 
(m).$$

So $F $ is inconsistent, a contradiction, so $F _{spec}$ is consistent.
\end{proof}

\section{The stable halting problem} \label{section:systemMpm} 
We now discuss a version of the halting problem in the
context of stable computability. This will illustrate the
main ideas that will be used in the proof of the
incompleteness theorems, and will allow us to setup notation for later use.

Let $\mathcal{D} _{\mathcal{T} } \subset \mathcal{T} $ be as in 
Definition \ref{def:calDB} with respect to $B= 
\mathcal{T} $. 

\begin{lemma} \label{lemma:TuringD} 
There is a computable total map $$\Omega: \mathcal{T} \to \mathcal{T},$$ with the 
properties:
\begin{enumerate}
	\item For each $T$, $\Omega   (T) \in \mathcal{D}
	_{\mathcal{T} }$. 
	\item If $T \in \mathcal{T} _{\mathcal{D} }$ then  $\Omega  
(T) $ and $T$ compute the same maps $\mathcal{T} \times
\mathbb{N} \to  \{\pm \} $, that is $T' = (\Omega (T))'$.   \label{property_2}
\end{enumerate}
\end{lemma}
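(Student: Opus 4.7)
The plan is to mirror the proof of Lemma \ref{lemma:TuringK}, but with the domain/codomain roles of the Turing machines adjusted to fit the signature of a decision map $\mathcal{T} \times \mathbb{N} \to \{\pm\}$ instead of an enumeration map $\mathbb{N} \to B \times \{\pm\}$. (I interpret the hypothesis in property (\ref{property_2}) as $T \in \mathcal{D}_{\mathcal{T}}$, which is what the context demands.) The basic move is: assemble a single computable partial map $G: \mathcal{T} \times \mathcal{T} \times \mathbb{N} \to \{\pm\}$ that simulates ``run the first Turing machine on the encoded input $(S,n)$, then decode the output as a symbol in $\{\pm\}$'', and then apply Part \ref{axiom:split} of Proposition \ref{prp_propertiesOfS} to slice off the first factor, obtaining the sought-after $\Omega$.

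Concretely, I would exhibit $G$ as a composition of already-known computable maps:
\begin{equation*}
\mathcal{T} \times (\mathcal{T} \times \mathbb{N}) \xrightarrow{\,\id_{\mathcal{T}} \times\, e_{\mathcal{T} \times \mathbb{N}}\,} \mathcal{T} \times \mathbb{N} \xrightarrow{\,U\,} \mathbb{N} \xrightarrow{\,e_{\{\pm\}}^{-1}\,} \{\pm\},
\end{equation*}
where the middle copy of $\mathbb{N}$ carries the identity encoding. The first arrow is computable because $e_{\mathcal{T} \times \mathbb{N}}$, viewed as a morphism in $\mathcal{S}$ to $\mathbb{N}$ with its identity encoding, is the identity Turing machine on encodings; Part \ref{axiom:product} then gives that the product $\id_{\mathcal{T}} \times e_{\mathcal{T} \times \mathbb{N}}$ is computable. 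The second is just the universal Turing machine from Part \ref{axiom:universal}, and the third is computable by Lemma \ref{lem_inverse}. So $G$ is a composition of computable maps, hence computable. Part \ref{axiom:split} now yields a total computable $\Omega: \mathcal{T} \to \mathcal{T}$ such that $\Omega(T)$ computes the slice $G^{T}: \mathcal{T} \times \mathbb{N} \to \{\pm\}$, $G^{T}(S,n) = G(T,S,n)$.

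Finally I would check the two claimed properties. Membership $\Omega(T) \in \mathcal{D}_{\mathcal{T}}$ is immediate: by construction $\Omega(T)$ computes some partial map of type $\mathcal{T} \times \mathbb{N} \to \{\pm\}$, which is exactly the defining condition of Definition \ref{def:calDB} with $B = \mathcal{T}$. For property (\ref{property_2}), if $T \in \mathcal{D}_{\mathcal{T}}$ with associated $T': \mathcal{T} \times \mathbb{N} \to \{\pm\}$, the commutative diagram of Definition \ref{def:calDB} gives $T(e_{\mathcal{T} \times \mathbb{N}}(S,n)) = e_{\{\pm\}}(T'(S,n))$ whenever $T'(S,n)$ is defined (and both sides are undefined otherwise); unwinding, $G^{T}(S,n) = e_{\{\pm\}}^{-1}(U(T, e_{\mathcal{T} \times \mathbb{N}}(S,n))) = T'(S,n)$, so $(\Omega(T))' = T'$. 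I do not expect a substantive obstacle: the entire argument is a direct translation of Lemma \ref{lemma:TuringK}, and the axiomatic structure of $\mathcal{S}$ from Proposition \ref{prp_propertiesOfS} does all the heavy lifting.
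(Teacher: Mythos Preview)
Your proposal is correct and is precisely the argument the paper intends: the paper's proof consists of the single line ``Analogous to the proof of Lemma~\ref{lemma:TuringK},'' and you have faithfully carried out that analogy (including correctly reading the hypothesis of property~(\ref{property_2}) as $T \in \mathcal{D}_{\mathcal{T}}$). There is nothing to add.
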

\begin{proof} [Proof]
Analogous to the proof of Lemma \ref{lemma:TuringK}.
\end{proof}

\begin{definition}\label{def:Tdecided}
For $T \in \mathcal{D} _{\mathcal{T}}$, 
\textbf{\emph{$T$ is $T$-decided}}, is a special 
case of Definition \ref{def:Ddecided}. Or more 
specifically, it means that the element $T \in 
\mathcal{T} $ is $T'$-decided. 
We also say that $T$ is \emph{not}  $T$-decided, when $\neg 
(\text{$T$ is $T$-decided})$ holds.
\end{definition}
%
%
% for every $n$ with $i ^{-1} (T) (T,n) = +$ there exists an $m>n$ such that $i ^{-1} (T) (T,m) = -$.
In what
follows, denote by $s (T)$ the sentence:
\begin{equation*}
T \text{ is not $\Omega (T)$-decided}.
\end{equation*}
which can be naturally interpreted as a sentence of
arithmetic, but we leave this implicit for the moment.
\begin{definition} 
We say that $D: \mathcal{T} \times \mathbb{N} \to \{\pm\}$
is \textbf{\emph{stably sound}}  if
\begin{equation*}
\forall T \in \mathcal{T}  \; (\text{$T$ is $D$-decided}) \implies s (T) .
\end{equation*}
% We say that $D$ is \textbf{\emph{stably sound}} if it is stably sound on all $T$.
% for each $T$:
% \begin{align*}
% & \hbar \text{ is $(D,T)$-stable }  \implies \text{$T \notin \mathcal{T} ^{\pm} $ or no $O$ is 
%    $(T,T)$-stable.
%    } \\ & O \in \mathbb {N} \text{ is $(D,T)$-stable } \implies \text{$T \in \mathcal{T} ^{\pm} $ and $O$ is $(T,T)$-stable}.
% \end{align*}
We say that $D$ \textbf{\emph{stably decides $T$}} if: 
\begin{equation*}
   s (T) \implies \text{$T$ is $D$-decided}.
\end{equation*}
\begin{comment}
We say that $D$ \textbf{\emph{stably soundly 
 decides $T$}} if $D$ is stably sound on $T$ and 
 $D$ stably decides $T$.
\end{comment}
We say that $D$ is \textbf{\emph{stably sound and complete}}
if $D$ is stably sound and $D$ stably decides $T$ for all $T \in \mathcal{T}.$
\end{definition}

The informal interpretation of the above is that 
each such $D$ is understood as an operation with the properties:
\begin{itemize}
\item  For each $T,n$ $D (T,n) = +$ if and only if 
$D$ ``decides'' the sentence $s (T)$ is true, at the moment
$n$. 
\item For each $T,n$ $D (T,n) = -$ if and only if $D$ cannot 
``decide'' the sentence $s (T)$ at the moment $n$, or $D$
``decides'' that $s (T)$ is false.
  \end{itemize} 

In what follows for $T \in \mathcal{T}$, and $D$ 
as above, $\Theta _{D,T} $ is shorthand for 
the sentence: $$\text{$T$ stably computes $D$},$$ 
where this is as in Definition \ref{def:ThetaDT}.
\begin{lemma} \label{lemma:Dstablysound} If $D$ is stably sound then
\begin{equation*}
  (\forall T \in \mathcal{T})  \;  \neg \Theta  _{{D}, T}  \lor  \neg (\text{$T$ is $D$-decided}).
\end{equation*}
\end{lemma}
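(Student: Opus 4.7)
The proof is a direct diagonal argument, entirely analogous to the halting problem proof but tracked through the stabilization formalism. The plan is to assume the conjunction $\Theta_{D,T} \land (\text{$T$ is $D$-decided})$ for some $T$ and derive a contradiction with the stable soundness hypothesis.

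First I would unpack $\Theta_{D,T}$: by Definition \ref{def:ThetaDT}, it says $T \in \mathcal{D}_{\mathcal{T}}$ and $T'$ is stably equivalent to $D$ as maps $\mathcal{T} \times \mathbb{N} \to \{\pm\}$. Applying stable equivalence to the particular element $T \in \mathcal{T}$ and using our second assumption ``$T$ is $D$-decided'', I conclude that $T$ is $T'$-decided.

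Next I would invoke Lemma \ref{lemma:TuringD}(\ref{property_2}): since $T \in \mathcal{D}_{\mathcal{T}}$, we have $T' = (\Omega(T))'$, so ``$T$ is $T'$-decided'' is literally the same statement as ``$T$ is $\Omega(T)$-decided'' (this is exactly Definition \ref{def:Tdecided}). On the other hand, the stable soundness of $D$ applied to our assumption ``$T$ is $D$-decided'' yields the sentence $s(T)$, which asserts that $T$ is \emph{not} $\Omega(T)$-decided. Combining the two gives the contradiction, completing the proof.

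There is no serious obstacle here: the lemma is essentially the self-referential diagonalization, and all the technical work lies in the preceding construction of $\Omega$ (Lemma \ref{lemma:TuringD}), which guarantees that any would-be decider $T$ for the stable halting predicate $s$ is forced to decide its own behavior via $\Omega(T)$. The only subtlety to be careful about is keeping straight the distinction between $T \in \mathcal{T}$ as an input to $D$ (an element being queried) and $T$ as the machine stably computing $D$ (so that $T \in \mathcal{D}_{\mathcal{T}}$); conflating these is precisely what allows the diagonal collision.
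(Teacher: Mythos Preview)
Your proof is correct and follows essentially the same approach as the paper's: both assume $\Theta_{D,T}$ together with ``$T$ is $D$-decided'', use stable soundness to obtain $s(T)$, and use property~\ref{property_2} of $\Omega$ (via $T\in\mathcal{D}_{\mathcal{T}}$) to identify ``$\Omega(T)$-decided'' with ``$D$-decided'', yielding the contradiction. The only difference is the order in which the two ingredients are invoked, which is immaterial.
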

\begin{proof} 
If $\text{$T$ is $D$-decided}$ then 
since $D$ is stably sound, \text{$T$ is not $\Omega (T)$-decided}.
So if in addition $\Theta _{D,T}$ then by property
\ref{property_2} of $\Omega$, $(\Omega (T))' = D$. And so 
$T$ is not $D$-decided a contradiction.
\end{proof}
The following is the ``stable'' analogue of 
Turing's halting theorem. 
\begin{theorem} \label{thm:decideP} There is no 
(stably) computable $D: \mathcal{T} \times \mathbb{N} \to
\{\pm \}$ that is stably sound and complete. 
\end{theorem}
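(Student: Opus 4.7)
The plan is a diagonalization in the style of Turing's halting theorem, with a Turing machine that (stably) computes $D$ being fed as input to itself. Suppose for contradiction that $D$ is stably computable, stably sound, and stably complete. By the stable computability hypothesis there is some $T_D \in \mathcal{T}$ stably computing $D$, so $\Theta_{D, T_{D}}$ holds; in particular $T_D \in \mathcal{D}_{\mathcal{T}}$.

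First I would apply Lemma \ref{lemma:Dstablysound} at $T = T_D$: since $\Theta_{D,T_D}$ is true, the disjunction forces $T_D$ to \emph{not} be $D$-decided. Next I use stable completeness: $D$ stably decides every $T \in \mathcal{T}$, so in particular $s(T_D) \implies T_D$ is $D$-decided. Contraposing, $T_D$ not $D$-decided gives $\neg s(T_D)$, i.e.\ $T_D$ \emph{is} $\Omega(T_D)$-decided.

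Finally I would chase this back through $\Omega$: by property \ref{property_2} of Lemma \ref{lemma:TuringD}, since $T_D \in \mathcal{D}_{\mathcal{T}}$ we have $(\Omega(T_D))' = T_D'$, and by $\Theta_{D, T_D}$ together with Definition \ref{def:ThetaDT} the map $T_D'$ is stably equivalent to $D$. Hence $T_D$ is $\Omega(T_D)$-decided iff $T_D$ is $D$-decided. Combining this with the conclusion of the completeness step yields that $T_D$ \emph{is} $D$-decided, directly contradicting the soundness step.

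The argument is essentially formal once one has the preceding two lemmas: the substantive content has been packaged into Lemma \ref{lemma:TuringD} (which lets us canonically view any $T$ as an element of $\mathcal{D}_{\mathcal{T}}$ while remembering what it computes) and Lemma \ref{lemma:Dstablysound} (the soundness half of the diagonal). So I do not anticipate a real obstacle; the only thing to verify carefully is that $\Theta_{D, T_D}$ together with property \ref{property_2} of $\Omega$ genuinely forces $(\Omega(T_D))'$ to be stably equivalent to $D$, rather than merely equal to $T_D'$ — this is where the distinction between literal equality and stable equivalence in Definition \ref{def:ThetaDT} must be used correctly.
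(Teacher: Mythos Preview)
Your proposal is correct and follows essentially the same diagonalization as the paper: apply Lemma~\ref{lemma:Dstablysound} to get that $T_D$ is not $D$-decided, then use completeness together with property~\ref{property_2} of $\Omega$ (plus the stable-equivalence built into $\Theta_{D,T_D}$) to force $T_D$ to be $D$-decided, a contradiction. The paper phrases the argument universally over all $T$ with $\Theta_{D,T}$ and applies the $\Omega$-property and completeness in the reverse order, but the content is identical.
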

\begin{proof} Let $D$ be 
stably sound and complete. Then by the above lemma we obtain:
\begin{equation} \label{eq:first}
(\forall T \in {\mathcal{T}}) \;  (\Theta  
_{{D}, T}  \vdash  \neg (\text{$T$ is $D$-decided})).
\end{equation}
Again as a consequence of property
\ref{property_2} of $\Omega$,  it is immediate: 
\begin{equation} \label{eq:second}
(\forall T \in {\mathcal{T}})  \; (\Theta  _{{D}, T} 
\implies \left( \neg (\text{$T$ is $D$-decided}))
\implies \neg (\text{$T$ is $\Omega (T)$-decided}) \right).
\end{equation}
So combining \eqref{eq:first}, \eqref{eq:second} above we obtain 
$$(\forall T \in {\mathcal{T}}) \; (\Theta _{{D},
T} \implies \neg (\text{$T$ is $\Omega (T)$-decided})). $$
But $D$ is complete so $$(\text{$T$ is 
$\Omega (T)$-decided}) \implies \text{$T$ is $D$-decided} $$
and so:
$$(\forall T \in {\mathcal{T}}) \; (\Theta _{{D}, T}
\implies (\text{$T$ is $D$-decided})). $$
Combining with \eqref{eq:first} 
we get $$(\forall T \in {\mathcal{T}}) \; \neg \Theta _{{D}, T},$$ which is what we wanted to prove.
\end{proof}
\section{Incompleteness theorems} \label{section:stablyconsistent} 
Let $s: \mathcal{T} \to \mathcal{A} $, $T \mapsto 
s (T) $ be as in the previous section. 
Define 
\begin{equation*}
   \label{eq:wtD}
   H: \mathcal{T} \times \mathcal{T} \times 
\mathbb{N} \to \{\mathbb{\pm}\},
\end{equation*}
by $$H (F,T,n):= (Dec 
_{\mathcal{L}} ({C(Spec _{i}(F))))'} (i \circ s (T),n ).$$
We can express $H$ as the composition of the sequence of maps:
\begin{equation} \label{eq:wtDH}
\mathcal{T} \times \mathcal{T} \times 
\mathbb{N} \xrightarrow{(Dec _{\mathcal{L}} \circ C 
\circ Spec _{i}) \times 
(i \circ s) \times id} \mathcal{T} \times \mathcal{L} \times 
\mathbb{N} \xrightarrow{id \times e _{\mathcal{L} 
\times \mathbb{N} }} \mathcal{T} \times 
\mathbb {N} \xrightarrow{U} \mathbb {N} 
\xrightarrow{e _{\{\pm \} } ^{-1}} \{\pm\}.
\end{equation}
% where the last map is:
% \begin{equation*}
%    \Sigma \mapsto \begin{cases}
%       \text{undefined}, &\text{ if } \Sigma \notin \{\pm \} _{e} \\
%       e _{\{\pm \} } ^{-1} (\Sigma), & \text 
%       {otherwise}. 
%    \end{cases}
% \end{equation*}
% (M,T,n):= (Dec _{\mathcal{A}} (C (Spec (M) )))' 
% (s (T),n),

Thus, $H$ is a 
composition of maps that are computable by Proposition
\ref{prp_propertiesOfS} and so $H$ is computable.  Hence, by
part \ref{axiom:split} of Proposition
\ref{prp_propertiesOfS}, there 
is an associated total computable map:
\begin{equation} \label{eq:Tur}
  Tur: \mathcal{T} \to \mathcal{T},  
\end{equation}
s.t. for each $F \in \mathcal{T} $, $Tur (F) $ 
computes the map 
\begin{equation}\label{eq_DK}
  D ^{F}: \mathcal{T} \times 
\mathbb{N} \to \{\pm \}, \; D ^{F} (T,n) = H 
(F,T,n), 
\end{equation}
%.

In what follows, $T \in \mathcal{T} _{\mathcal{L} }$,
and we rename $T'$ as $M$ so $M: \mathbb{N} \to 
\mathcal{L} \times \{\pm \} $. 

\begin{notation}
\label{not_} As usual, for a map $J: \mathbb{N} \to 
\mathcal{L} \times \{\pm \}$, notation of the
form $J \vdash \alpha$ means $J ^{s} \vdash \alpha$. If $i:
\mathcal{A} \to \mathcal{L} $ is a translation, then $J
\vdash ^{i} \alpha $ will denote $J ^{s}\vdash ^{i} \alpha
$. $J$ is $(n)$-consistent stands for $J ^{s}$ is $(n)$-consistent.
\end{notation}

% TODO: {simplify Ms}
% TODO: {simplify notation, add T's}
\begin{proposition}   \label{thm:synctatic}
For $(M,T)$ as above and given a 2-translation $i:
\mathcal{A} \to \mathcal{L} $ we have:
\begin{enumerate}
	\item 
$\text{$M$ is 1-consistent relative to the
translation $i$} \implies M  
\nvdash i \circ s (Tur (T)).$
	\item $\text{$M$ is 2-consistent relative to the
	 translation $i$} \implies M  
   \nvdash \neg i \circ s (Tur (T)).  
$
\item  $\text{$M$ is 1-consistent relative to the
translation $i$} \implies s (Tur (T)).$
\end{enumerate}
% Suppose that $M$ is and stably $\omega$-consistent,
% then 
% \begin{align*}
%    \forall T \in \mathcal{T}: \Theta _{Z, T} \implies  .
% \end{align*}
% define $Z'$ by $Z'(T,n):= \widetilde{D} _{CN} (T,n):= D _{CN} (s (T),n), $ 
%    then 
% is a theorem of $ZFC$ under standard
% interpretation of all terms, (this will be
% further formalized in the course of the proof). 
 % \footnote {We can also say that if $\Theta _{Z, T}$ is satisfied in the standard model $\mathbb{N}$ of arithmetic then $s (T)$ is satisfied in $\mathbb{N}$. In this sense $s (T)$ is `true'. We avoid making this explicit due to lack of familiarity with model theory language.} 
\end{proposition}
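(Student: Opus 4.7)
The plan is to establish a single Gödel-style fixed-point equivalence and then extract a contradiction with consistency of the speculative extension $F_{spec} := (Spec_i(T))^s$ in each failure case, invoking Theorem \ref{lemma:speculative}.

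Set $T_{cl} := C(Spec_i(T))$. Under $1$-consistency of $M$ (present in every part, since $2$-consistency implies $1$-consistency as a $\Sigma_0$ formula is trivially $\Pi_1$), $M \vdash^i Q$ and Theorem \ref{lemma:speculative} yields that $F_{spec}$ is consistent with $M^s \subset F_{spec}$. Lemma \ref{lemma:CM} gives $T_{cl}^s = \overline{F_{spec}}$, and Lemma \ref{lemma:convert} gives that $\alpha$ is $Dec_{\mathcal{L}}(T_{cl})$-decided iff $\alpha \in \overline{F_{spec}}$. Since $Tur(T) \in \mathcal{D}_{\mathcal{T}}$ by construction of $Tur$, Lemma \ref{lemma:TuringD}(2) yields $(\Omega(Tur(T)))' = D^T$. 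Unwinding $D^T$ from \eqref{eq:wtDH} produces the central equivalence
\begin{equation*}
F_{spec} \vdash^i s(Tur(T)) \iff Tur(T) \text{ is } \Omega(Tur(T))\text{-decided} \iff \neg s(Tur(T)).
\end{equation*}

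Next, whenever $\neg s(Tur(T))$ is factually true, $F_{spec} \vdash^i \neg s(Tur(T))$. Using totality of $Dec_{\mathcal{L}}$ (hence of $D^T$), $\neg s(Tur(T))$ can be rewritten in arithmetic as $\exists k \,\forall N \,\psi(k, N)$ with $\psi \in \mathcal{F}_0$: the outer existential packages both the stabilization index $m$ and a halting-time witness $s'$ for "$D^T(Tur(T), m) = +$", while the inner universal ranges over pairs $N = (n, s)$, and $\psi$ is the bounded-computation check on $Tur(T)$. If $\neg s(Tur(T))$ holds with witness $k_0$, every instance $\psi(k_0, N)$ is a true $\Sigma_0$ statement and so $Q$-provable; speculativity of $(Spec_i(T))'$ then yields $F_{spec} \vdash^i \forall N \,\psi(k_0, N)$, hence $F_{spec} \vdash^i \neg s(Tur(T))$.

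The three parts now follow uniformly. For (1), $M \vdash^i s(Tur(T))$ forces $F_{spec} \vdash^i s(Tur(T))$, the equivalence gives factual $\neg s(Tur(T))$, and the speculativity step gives $F_{spec} \vdash^i \neg s(Tur(T))$: $F_{spec}$ is inconsistent, a contradiction. For (3), assume factual $\neg s(Tur(T))$ and derive the same contradiction directly. For (2), apply $2$-consistency to $M \vdash^i \exists k \,\Phi(k)$ where $\Phi(k) := \forall N \,\psi(k, N)$ is $\Pi_1$: there is $k_0$ with $M \nvdash^i \exists N \,\neg \psi(k_0, N)$. Any false instance $\psi(k_0, N_0)$ would be $\Sigma_0$-refutable in $Q$, hence in $M$, contradicting that choice. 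So $\Phi(k_0)$ is true factually, giving factual $\neg s(Tur(T))$; the equivalence then gives $F_{spec} \vdash^i s(Tur(T))$, while the assumption gives $F_{spec} \vdash^i \neg s(Tur(T))$, again yielding inconsistency of $F_{spec}$.

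The main obstacle is the precise arithmetical rewriting of "is decided" as $\exists k \,\forall N \,\psi(k, N)$ with $\psi \in \mathcal{F}_0$. Totality of $Dec_{\mathcal{L}}$ and of $D^T$ is essential so the stepwise computations admit bounded $\Sigma_0$ descriptions, and both the stabilization index and the halting-time witness must be coded into $k$ for the remaining $\Pi_1$ tail to be amenable to speculativity.
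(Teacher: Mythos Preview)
Your proposal is correct and follows essentially the same route as the paper. Both arguments hinge on the same ``central equivalence'' $F_{spec}\vdash^i \mathfrak{s} \iff \neg\mathfrak{s}$ (the paper records this as \eqref{eq_OmegaTurT} together with Lemma~\ref{lemma:convert}), the same speculativity step $\neg\mathfrak{s}\implies F_{spec}\vdash^i\neg\mathfrak{s}$ via the $\exists\forall$-normal form of $\neg\mathfrak{s}$, and the same appeal to Theorem~\ref{lemma:speculative} for consistency of $F_{spec}$. Your treatment of the $\Sigma_2$ rewriting (packaging the stabilization index together with a halting witness into $k$) is more explicit than the paper's bare assertion that $\neg\mathfrak{s}$ has the form $\exists m\,\forall n\,\gamma(m,n)$ with $\gamma\in\Sigma^0_0$, but the content is identical; one small notational slip is that your $\psi(k,N)$ has two free variables while $\mathcal{F}_0$ is defined for one, so write ``$\psi$ is $\Sigma^0_0$'' rather than ``$\psi\in\mathcal{F}_0$''.
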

\begin{proof} 
% Arithmetic is interpreted in set theory the 
% standard way, using the standard set $\mathbb{N}$  
% of natural numbers. So for example, for $M: 
% \mathbb{N} \to \mathcal{A} \times \{\pm \} $ a sentence of the form $M \vdash \alpha$ is a priori interpreted as a 
% sentence of $ZFC$, however if $M$ is a Turing 
% machine this also can be interpreted as a sentence 
% of $PA$, once G\"odel encodings (the category $\mathcal{S} $) are invoked.
% To further emphasize this we may state which specific
% sentences are theorems of $ZFC$. 
Since $T$ will be fixed, let us abbreviate $s (Tur (T))$ by $\mathfrak s$.
Set $N:= (C \circ Spec _{i} (T))' $, in particular this is 
a speculative (with respect to $i$) map $\mathbb{N} \to 
\mathcal{L} \times \{\pm \}  $.
% TODO: {verify}
Suppose that $M  
\vdash i (\mathfrak s)$. In particular, $i (\mathfrak s)$ is $N$-stable, and so by Lemma 
\ref{lemma:convert}
$i (\mathfrak s) $ is $Dec _{\mathcal{L}} (C(Spec _{i}
(T)))$-decided. And so $Tur (T)$ is ${D} ^{T}$-decided by definitions, where $D ^{T}$ is as in \eqref{eq_DK}.

Now, since $Tur (T) $ computes $D ^{T}$ by construction, we
have $(\Omega (Tur (T)))' = D ^{T}$ and so:

\begin{align} \label{eq_OmegaTurT}
\begin{split}
	\text{$Tur (T)$ is ${D} ^{T}$-decided} & \iff \text{$Tur
 (T)$ is $\Omega (Tur (T))$-decided} \\
 & \iff \neg \mathfrak s.
\end{split}
\end{align}
%.
That it to say:
\begin{equation} \label{eq:MimpliesetaZ}
     (M  \vdash i (\mathfrak s)) \implies \neg \mathfrak s.
\end{equation}

Now, $\neg \mathfrak s$ is an arithmetic sentence of the form: 
\begin{equation} \label{eq_sentenceS}
   \exists m \forall n \; \gamma (m,n),
\end{equation}
where  $\gamma  \in \Sigma ^{0} _{0}$. 
And we have:
\begin{align}
   \neg \mathfrak s  & \implies (\exists m \forall n) \; 
   Q \vdash \gamma (m,n), \quad \text{since $\gamma$ is
	 $\Sigma ^{0} _{0}$}  \\
   & \implies \exists m \; N   \vdash ^{i}  \forall 
   n \; \gamma  (m,n),
   \quad \text{since $N$ is speculative} \\
   & \implies N  \vdash ^{i} \neg \mathfrak s, \; (\text{by
	 existential introduction}). 
   % & \implies N  \vdash \neg s (Tur (T)), \quad \text{by 
   % \eqref{eq:Qproves} and since $N ^{s} \vdash ^{i} 
   % Q$}. 
\end{align}
% then by consistency of $M ^{s} $ for some $m _{0}$: $$Q \vdash \rho (m _{0}).$$ 
   % and so $M ^{s} \vdash \rho (m _{0}) $  by $M ^{s}
	 % $ being consistent and by $M ^{s} \supset Q $. 
% In particular:
% $$\neg s (T) \implies \exists m \forall n: M ^{s}  \vdash \rho (m) \land M ^{s} \vdash \gamma (m,n).$$
%    Likewise, if $$M ^{s} \vdash \forall n: \gamma (m _{0},n)$$ then 
% again by consistency of $M ^{s} $
% $$\forall n: Q \vdash \gamma (m _{0} ,n)$$ then again $\forall n: M ^{s}  \vdash \gamma (m _{0} ,n)$ and so
% So that (instantiating) 
% % $$\neg s (T) \implies \exists m: N \vdash \phi (m),$$ 
% $$s (\eta _{Z}) ,$$ and so 
% $$ s (\eta _{Z}) . $$
And so combining with \eqref{eq:MimpliesetaZ}, we get:
$$(M \vdash i (\mathfrak s) ) \implies (N \vdash
i (\mathfrak s)) \land (N \vdash \neg i (\mathfrak s)).$$

Since by Theorem \ref{lemma:speculative} 
$$\text{$M $ is 1-consistent} \implies 
\text{$N $ is consistent},$$ 
it follows:
\begin{align} \label{eq:firstpart}
\text{$M $ is 1-consistent} & \implies  M  
\nvdash ^{i} \mathfrak s  \\
& \implies  (\text{$Tur(T)$ is not $\Omega
(Tur(T))$-decided}), \; \text{by \eqref{eq_OmegaTurT}} \\
& \implies \mathfrak s.
\end{align}
This proves the first and the third part of the proposition.
   % $$M ^{s} \vdash \forall n: \gamma (m _{0},n)$$. Thus $M ^{s} \vdash \neg s (T) $ and so $M ^{s} $ is inconsistent and we have a contradiction. 

Now suppose $$(\text{$M $ is 2-consistent})
\land (M  \vdash ^{i} \neg \mathfrak s). $$ 
% Since we have 
% \eqref{eq:PAvdashZe}, and since $M \vdash PA$ 
% it follows that $M \vdash \eta (T)$.

Set $$\phi (m)=\forall n \; \gamma (m,n),$$ where
$\gamma  (m, n)$ is as in \eqref{eq_sentenceS}.
Now, 
\begin{align*}
   M \vdash ^{i} \neg \mathfrak s & \iff M  \vdash ^{i}  \exists m \; \phi (m)\\
	 &  \implies  \exists m \; M   \nvdash ^{i}
	 \neg \phi (m), \quad \text{by $2$-consistency} \\
	 &  \implies \exists m \forall n \; Q \vdash \gamma (m,n),
	 \quad \text{as $M \vdash ^{i} Q$ and 
$\gamma (m,n) $ is $Q$-decidable}.
\end{align*}
   % Also $M ^{s} \vdash \forall n: \phi (n)$, so that 
% again by the assumption that $M ^{s} $ is consistent:
% $$\forall n: Q \vdash  \phi (n).$$  
% Consequently $ZF$ proves:
% \begin{equation*}
% M ^{s} \vdash \neg s (T) \implies Q \vdash \rho (m _{0})
% \land \forall n: Q \vdash \gamma (m _{0},n),
% \end{equation*}
And so,
   $$(\text{$M$ is 2-consistent}  \land (M  
   \vdash ^{i} \neg \mathfrak s )) \implies \neg \mathfrak s.$$ 
% Now $$ZF \vdash  \neg s (Z) \implies \eta _{Z}, $$ since by construction of $Z$, $Z \in \mathcal{D} ^{t}$ is satisfied.
Now,
\begin{equation*}
   \neg \mathfrak s \implies N \vdash \mathfrak s, 
\end{equation*}
by definitions and \eqref{eq_OmegaTurT}. So:
\begin{align*}
   (\text{$M$ is 2-consistent})  \land (M  
\vdash ^{i} \neg \mathfrak s) & \implies N \vdash \mathfrak
s \\
& \implies \text{ $N $ is inconsistent } \\
& \implies \text{ $M $ is not 1-consistent, 
by Theorem \ref{lemma:speculative} } \\
   & \implies \text{ $M $ is not 
   2-consistent}. \\
\end{align*}
So we get a contradiction, and so:
$$ \text{$M$ is 2-consistent} \implies M \nvdash ^{i} \neg
\mathfrak s. $$
This finishes the proof of the proposition.
\end{proof}

\begin{proof} [Proof of Theorem \ref{thm_mainintro}] 
The computable map $\mathcal{G}$ is defined to be $T
\mapsto s (Tur (T))$.  Then the theorem follows immediately by the proposition above. 
% applied to the pair
% $((K _{\mathcal{L} } (T))', K _{\mathcal{L} } (T))$.     
\end{proof}	
	
	%\land \neg (N ^{s} \vdash \neg s (T)),

% \land \neg (M ^{s} \vdash \neg s (T)).
%    The last part of the theorem follows by the last part of the proposition above. 
\begin{proof} [Proof of Corollary \ref{cor_secondinc}]
Suppose that $$(F \vdash ^{i} \text{$F$ is 1-consistent})  \land
(F
\vdash ^{i} \text{$T$ stably enumerates $F$}) \land
(\text{$F$ is 1-consistent}) $$ then since $F \vdash ^{i} ZFC$, 
$$(F \vdash s (Tur (T))) \land \text{$F$ is
1-consistent}$$ by part three of Proposition \ref{thm:synctatic}. But
this contradicts part one of Proposition \ref{thm:synctatic}.
\end{proof}

\begin{proof} [Proof of Theorem \ref{thm:corollarySecondIncompleteness}]
Let $M$ be a standard model of $F _{i,
\mathcal{Z} }$ as in the definition of strong consistency.
In particular:
\begin{equation} \label{eq_alphaU}
(\forall \alpha  \in {F} _{i, \mathcal{Z}} ) \; M
\models \alpha.
\end{equation}
% where $\alpha ^{\mathcal{U} } $ denotes the sentence $\alpha
% $, with domain of discourse restricted to $\mathcal{U} $,
% that is quantification is bounded over $\mathcal{U} $. 

Suppose that $F \vdash ^{i} \text{$F$ is
1-consistent}$, then $M \models (F \vdash ^{i} \text{$F$ is
1-consistent})$, using that $M$ is standard.
Suppose also:
$$F \vdash ^{i} (\exists {T} \in \mathcal{T})  \; \text{$T$ stably computes $F$},$$ 
then by \eqref{eq_alphaU} $$M \models (\exists {T} \in \mathcal{T})  \; \text{$T$
stably computes $F$}.$$
And so we obtain:
\begin{equation*}
M \models (\text{$F$ is 1-consistent}) \land
((\exists {T} \in \mathcal {T}) \; \text{$T$
stably computes $F$}) \land (F \vdash ^{i} \text{$F$ is
1-consistent}),
\end{equation*}
that is
\begin{equation} \label{eq_models1}
M \models (\exists {T} \in \mathcal {T}
_{\mathcal{\mathcal{L}}})  \; (\text{$(T') ^{s}$ is 1-consistent}) \land
 (T' \vdash ^{i} \text{$(T') ^{s}$ is 1-consistent})).
\end{equation}
Rephrasing Corollary \ref{cor_secondinc} we get:
$$ZFC \vdash (\forall T \in \mathcal{T} _{\mathcal{L} })
\; (\text{$(T') ^{s}$ is
1-consistent} \implies  \text{$T' \nvdash ^{i} (T') ^{s}$ is
1-consistent}). $$
And $F \vdash ZFC$ and so again by \eqref{eq_alphaU} we get:
$$M \models \neg ((\exists {T} \in \mathcal{T} _{\mathcal{L}
}) \; (\text{$(T') ^{s}$ is 1-consistent}
\land T' \vdash ^{i} \text{$(T') ^{s}$ is
1-consistent})),$$
but this contradicts \eqref{eq_models1}.
% So we have obtained a contradiction (in $ZFC$).
\end{proof}

\appendix 
\section{Stable computability and physics - G\"odel's disjunction and Penrose} \label{section:Penrose}
We now give some partly physical motivation for the theory
above, in particular explaining why stable computability and
abstract languages were important for us.  We aim to be very
brief, as this is an excursion. But developing this appendix would be very interesting in an appropriate venue. 

% as a black box which receives inputs and produces outputs. 
% More specifically, this black box $M$ is meant to 
% be some system which contains a subject. We do not care about what is happening inside $M$. So we are not directly concerned here with such intangible things as understanding, intuition, consciousness - the inner workings of intelligence that are supposed to be special.  The only thing that concerns us is what output $M$ produces given an input. 
% Given this \emph{very} limited interpretation, gaga

We may say that a physical process is \emph{absolutely not
Turing computable}, if it is not Turing computable in any
``sufficiently physically accurate'' mathematical model. For
example, it is well known (see
for instance ~\cite{cite_EulerTuringComplete}) that solutions of fluid flow and $N$-body problems are generally non Turing 
computable (over $\mathbb{Z}$, and probably over 
$\mathbb{R}$ cf. \cite{cite_BlumShubSmalen}) 
as modeled in mathematics of classical 
mechanics.  But in a more physically accurate and 
fundamental model both of the processes above may become
computable.

The question posed by
Turing~\cite{cite_TuringComputingMachines},  but also by
G\"odel~\cite[310]{cite_Godel}  and more recently and much
more expansively by Penrose ~\cite{cite_PenroseShadows},
~\cite{cite_PenroseBeyondShadow}, ~\cite{cite_PenroseRoad}
is: 
\begin{question} \label{quest1}
   Are there absolutely not Turing computable physical
	 processes?  And moreover, are brain processes
	 absolutely not Turing computable?
\end{question}

% Turing himself, 
% has started on a form of Question 1 in his
% ``Computing machines and Intelligence'' \cite{citeTuringComputingMachines}, where he also
% informally outlined a  possible obstruction to a
% yes answer coming from G\"odel's incompleteness
% theorem. We will remark on this again further on.

\subsubsection {G\"odel's disjunction}
G\"odel argued for a 'yes' answer to Question 1, see
\cite[pg. 310]{cite_Godel}, relating the question to existence
of absolutely unsolvable Diophantine problems, see also
Feferman~\cite{cite_Feferman2006-SFEATA}, and
Koellner~\cite{cite_Koellner2018-KOEOTQ-3}, \cite{cite_KoellnerII2018-KOEOTQ-4} for a discussion.  
% In what follows, \emph{intelligence} is defined as in
% Turing~\cite{citeTuringComputingMachines}.  

We now discuss the question from  the perspective of our
main results.
First by an idealized mathematician, we mean a mapping $H: \mathbb{N} \to
\mathcal{\mathcal{L}}
\times \{\pm \}$, with $\mathcal{L}$ a first order language.
The language $\mathcal{L} $ is meant to be chosen so that it
is sufficient to formalize physical laws (we don't need to
formalize everything, the context will be self apparent). This might
be the language of set theory, but perhaps one needs more.
\footnote {It is of course possible that no such language
exists, but this is not our expectation: mathematics is
unreasonably effective at formalizing the universe.}
The mapping $H$ is meant to be the actual time
stamped output of a mathematician, idealized so that their
brain does not deteriorate in time, see also Remark
\ref{remark_intro}.
% We use $\mathcal{L}
% $ as opposed to a concrete language like the language of set
% theory, as we need that laws of the universe are expressible
% in $\mathcal{L} $ 

We set $\mathcal{H} = H ^{s}$. Now, soundness and in particular
strong consistency (Definition \ref{def_stronglyconsistent}) of
the stabilization $\mathcal{H} $ is not an unreasonable
hypothesis for our ``idealized'' mathematician,
as mathematical knowledge does appear to stabilize on truth.
Here the word
`stabilize' is used in the standard English language sense, but in this setting this is equivalent to the soundness of the mathematical stabilization $H ^{s}$.

% For one discussion of the problem of idealization see
% Feferman~\cite{cite_Feferman2006-SFEATA}. We cannot do much
% justice to such considerations here.

Without delving deeply into interpretations, we
suppose the following axioms for $\mathcal{H} $. 
\begin{enumerate}
	\item $\mathcal{H} $ is $\mathcal{L} $-definable. Meaning,
	that it is definable with respect to some $\mathcal{L} $-structure
	(informally, the latter is obtained from some mathematical (sub)model
	of the physical universe). \footnote {This is
	natural: as $H$ is determined by some physical processes, 
	it is $\mathcal{L} $-definable,
	since by assumption the physical laws are
	stated in the language $\mathcal{L}$.}
	\item $\mathcal{H} \vdash ^{i} ZFC $, where $i:
	\mathcal{Z} \to \mathcal{L}$ is
	a 2-translation, recall the
	Definition \ref{def_interprets}. 
	\item The ``Penrose
	property'' holds: $$\mathcal{H} \vdash ^{i}   (\mathcal{H}
	 \text{ is 1-consistent}.)  $$
\end{enumerate}
The ``Penrose property'' is motivated by ideas of
Roger Penrose as appearing in references above. This
property makes sense
if our idealized mathematician knows the definition of
$\mathcal{H} $, and asserts their soundness,
and hence 1-consistency of $\mathcal{H} $.  
That the definition of $\mathcal{H} $ is known, is not
unlikely.  The idea for this is to map the brain (synapses,
and other relevant fine structure); then assuming one knows the working of all underlying
physical processes, use this to reconstruct the $\mathcal{L}
$-theoretic definition of
$\mathcal{H} $. This would be a fantastically difficult
thing to do, but theoretically possible.  Given this, there
is no obvious reason to reject the above axioms.

The following just paraphrases Theorem
\ref{thm:corollarySecondIncompleteness}. 
\begin{theorem} \label{thm:1} For any $\mathcal{H}, \mathcal{L} 
$ as above, 
% Suppose that 
% $\mathcal{H} \vdash PA$. Then either $\mathcal{H} $ is not stably computably enumerable or $\mathcal{H} $ is not  1-consistent or $\mathcal{H} $ 
% cannot prove a certain true statement of arithmetic (and cannot disprove it if in addition $\mathcal{H} $ is 2-consistent). 
one of the following holds:
\begin{enumerate}
	\item $\mathcal{H} $ is not strongly consistent.	
	\item $\mathcal{H} \nvdash ^{i} \text{$\mathcal{H}$ is
	stably c.e.}$. (In particular,  given our interpretation
	of $\mathcal{H} $ as representing an idealized
	mathematician, it is unable to disprove existence of absolutely non Turing computable physical processes.)
\end{enumerate}
\end{theorem}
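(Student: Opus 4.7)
The plan is to deduce Theorem \ref{thm:1} as a direct logical consequence of Theorem \ref{thm:corollarySecondIncompleteness} together with the Penrose property, which by construction is tailored to eliminate one of the two disjuncts that Theorem \ref{thm:corollarySecondIncompleteness} delivers.

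First I would argue by contrapositive on the first disjunct. Assume $\mathcal{H}$ \emph{is} strongly consistent; it suffices to derive the second clause, namely $\mathcal{H} \nvdash^{i} \text{$\mathcal{H}$ is stably c.e.}$. The hypotheses of Theorem \ref{thm:corollarySecondIncompleteness} are now in force for $F = \mathcal{H}$: by the second axiom on $\mathcal{H}$, we have $\mathcal{H} \vdash^{i} ZFC$ via a 2-translation $i$, and by assumption $\mathcal{H}$ is strongly consistent. Applying Theorem \ref{thm:corollarySecondIncompleteness} yields the disjunction
\begin{equation*}
(\mathcal{H} \nvdash^{i} \text{$\mathcal{H}$ is 1-consistent}) \;\lor\; (\mathcal{H} \nvdash^{i} \text{$\mathcal{H}$ is stably c.e.}).
\end{equation*}

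Second, I would invoke the Penrose property, the third axiom on $\mathcal{H}$, which asserts exactly $\mathcal{H} \vdash^{i} (\mathcal{H} \text{ is 1-consistent})$. This directly negates the left-hand disjunct, forcing the right-hand disjunct to hold. That yields $\mathcal{H} \nvdash^{i} \text{$\mathcal{H}$ is stably c.e.}$, which is the second alternative of Theorem \ref{thm:1}. Combining both cases (either $\mathcal{H}$ is not strongly consistent, or the above holds) gives the disjunction in the statement.

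There is no real obstacle here once Theorem \ref{thm:corollarySecondIncompleteness} is in hand; the content of the argument is entirely in matching up the vocabularies. The only point that warrants a brief check is that the ``Penrose property'' as stated, together with the assumption that $\mathcal{H}$ is $\mathcal{L}$-definable and $\mathcal{H} \vdash^{i} ZFC$, makes the phrase ``$\mathcal{H}$ is 1-consistent'' a well-formed formula in the object language (via $i$), so that Theorem \ref{thm:corollarySecondIncompleteness} applies verbatim. This is handled by the framework of Section \ref{sec_firstorder} interpreting arithmetic (and hence the metatheoretic notion of 1-consistency relative to $i$) inside $\mathcal{L}$, so no additional lemma is needed.
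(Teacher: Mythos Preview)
Your argument is correct and matches the paper's own treatment: the paper simply states that Theorem \ref{thm:1} ``just paraphrases Theorem \ref{thm:corollarySecondIncompleteness}'' and gives no further proof. Your contrapositive reduction together with the Penrose property is exactly the intended unpacking of that paraphrase.
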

% \begin{remark} \label{rem_} The above is in a partial no-go
% theorem for cloning the `idealized human brain' as a Turing
% machine. For if you knew how to construct such a Turing
% clone, then in particular you must know an appropriate definition of
% $\mathcal{H} $, i.e. the latter is defined. If we also have
% the Penrose property for $\mathcal{H} $, then the
% pseudo-theorem above becomes an
% actual theorem when applied to $\mathcal{H} $. And so since
% we know in that case that alternative two of the theorem does
% not hold (by assumption we know $\mathcal{H} $ is stably c.e.), we must then conclude alternative one.
% \end{remark}

% The above of course does not preclude existence of
% meaningful Turing computable AI. But it does basically
% (modulo issues of idealization) preclude the idea that one
% can clone the human brain as a Turing machine. For
% if you know how to construct such a Turing clone, then in
% particular you know that provided that
% human has the Penrose property
%
\subsection*{Acknowledgements} Peter Koellner for helpful discussions on related topics. 
% Bibliography
% \bibliography{pnas-sample}
%  \printbibliography
\bibliographystyle{siam} 
\bibliography{link.bib}
 % \bibliography{/root/texmf/bibtex/bib/link} 
% <<<<<<< HEAD
% \bibliography{/home/yashasavelyev/texmf/bibtex/bib/link} 
% =======
% % \bibliography{/home/yashasavelyev/texmf/bibtex/bib/link}  
% \Bibliography{/home/yasha/texmf/bibtex/bib/link} 
% \bibliography{/home/yashasavelyev/texmf/bibtex/bib/ln}  \bibliography{/home/yasha/texmf/bibtex/bib/link} 
\end{document}